\documentclass[12pt]{amsart}
\usepackage{amsmath, amsthm, amscd, amsfonts, amssymb, graphicx, color,float,pgf,tikz}
\usepackage{amssymb,fontenc}
\usepackage{latexsym,wasysym,mathrsfs}
\usepackage{hyperref}
\usetikzlibrary{arrows}
\usepackage[square,numbers,sort&compress]{natbib}

\textheight =22,5cm
\textwidth =14,5cm

\makeatletter \oddsidemargin.9375in \evensidemargin \oddsidemargin
\marginparwidth1.9375in \makeatother


\newtheorem{theorem}{Theorem}[section]
\newtheorem{lemma}[theorem]{Lemma}
\newtheorem{proposition}[theorem]{Proposition}

\theoremstyle{definition}
\newtheorem{definition}[theorem]{Definition}
\newtheorem{example}[theorem]{Example}

\newtheorem{remark}[theorem]{Remark}
\numberwithin{equation}{section}



\newcommand{\be}{\begin{equation}}
\newcommand{\ee}{\end{equation}}
\newcommand{\bee}{\begin{example}}
\newcommand{\eee}{\end{example}}



\numberwithin{equation}{section}

\usepackage{etoolbox}
\makeatletter
\patchcmd{\@settitle}{\uppercasenonmath\@title}{}{}{}
\patchcmd{\@setauthors}{\MakeUppercase}{}{}{}
\makeatother

\allowdisplaybreaks
\begin{document}

\title{Continuous Controlled K-Frame for Hilbert $C^{\ast}$-Modules}

\author{Hamid Faraj$^{1*}$, Samir Kabbaj$^1$, Hatim Labrigui$^1$, Abdeslam Touri$^1$  and Mohamed Rossafi$^2$}

\address{$^{1}$Laboratory of Partial Differential Equations, Spectral Algebra and Geometry Department of Mathematics, Faculty of Sciences, University Ibn Tofail, Kenitra, Morocco}

\email{\textcolor[rgb]{0.00,0.00,0.84}{farajham19@gmail.com}}

\email{\textcolor[rgb]{0.00,0.00,0.84}{samkabbaj@yahoo.fr}}

\email{\textcolor[rgb]{0.00,0.00,0.84}{hlabrigui75@gmail.com}}

\email{\textcolor[rgb]{0.00,0.00,0.84}{touri.abdo68@gmail.com}}

\address{$^{3}$LaSMA Laboratory Department of Mathematics Faculty of Sciences, Dhar El Mahraz University Sidi Mohamed Ben Abdellah, B. P. 1796 Fes Atlas, Morocco}
\email{\textcolor[rgb]{0.00,0.00,0.84}{rossafimohamed@gmail.com; mohamed.rossafi@usmba.ac.ma}}

\subjclass[2010]{Primary 41A58;  Secondary 42C15.}

\keywords{Controlled Frame, Controlled K-frame, Continuous Controlled K-frame,   $C^{\ast}$-algebra, Hilbert $\mathcal{A}$-modules.}

\date{
	\newline \indent $^{*}$Corresponding author}

\begin{abstract}
	In this paper, we introduce and we study the concept of Continuous Controlled K-Frame for Hilbert $C^{\ast}$-Modules wich are generalizations of discrete Controlled K-Frames.
\end{abstract}
\maketitle
\vspace{0.1in}
\section{\textbf{Introduction and preliminaries}}
The concept of frames in Hilbert spaces has been introduced by
Duffin and Schaeffer \cite{Duf} in 1952 to study some deep problems in nonharmonic Fourier series. After the fundamental paper \cite{13} by Daubechies, Grossman and Meyer, frame theory began to be widely used, particularly in the more specialized context of wavelet frames and Gabor frames \cite{Gab}. Frames have been used in signal processing, image processing, data compression and sampling theory. 
The concept of a generalization of frames to a family indexed by some locally compact space endowed with a Radon measure was proposed by G. Kaiser \cite{15} and independently by Ali, Antoine and Gazeau \cite{11}. These frames are known as continuous frames. Gabardo and Han in \cite{14} called these frames associated with measurable spaces, Askari-Hemmat, Dehghan and Radjabalipour in \cite{12} called them generalized frames and in mathematical physics they are referred to as coherent states \cite{11}. 
In 2012, L. Gavruta \cite{02} introduced the notion of K-frames in Hilbert space to study the atomic systems with respect to a bounded linear operator K. Controlled frames in Hilbert spaces have been introduced by P. Balazs \cite{01} to improve the numerical efficiency of iterative algorithms for inverting the frame operator. Rahimi \cite{05} defined the concept of controlled K-frames in Hilbert spaces and showed that controlled K-frames are equivalent to K-frames due to which the controlled operator C  can be used as preconditions in applications.
Controlled frames in $C^{\ast}$-modules were introduced by Rashidi and Rahimi \cite{03}, and the authors showed that they share many useful properties with their corresponding notions in a Hilbert space. We extended the results of frames in Hilbert spaces to  Hilbert $C^{\ast}$-modules (see \cite{R1}, \cite{R2}, \cite{R3}, \cite{R4}, \cite{R5}, \cite{R6}, \cite{R7}, \cite{R8}, \cite{R9}, \cite{R10}, \cite{R11}, \cite{R12})\\
Motivated by the above literature, we introduce the notion of a continuous controlled K-frame in Hilbert $C^{\ast}$-modules.

In the following we briefly recall the definitions and basic properties of $C^{\ast}$-algebra, Hilbert $\mathcal{A}$-modules. Our references for $C^{\ast}$-algebras as \cite{{Dav},{Con}}. For a $C^{\ast}$-algebra $\mathcal{A}$ if $a\in\mathcal{A}$ is positive we write $a\geq 0$ and $\mathcal{A}^{+}$ denotes the set of positive elements of $\mathcal{A}$.
\begin{definition}\cite{Pas}	
	Let $ \mathcal{A} $ be a unital $C^{\ast}$-algebra and $\mathcal{H}$ be a left $ \mathcal{A} $-module, such that the linear structures of $\mathcal{A}$ and $ \mathcal{H} $ are compatible. $\mathcal{H}$ is a pre-Hilbert $\mathcal{A}$-module if $\mathcal{H}$ is equipped with an $\mathcal{A}$-valued inner product $\langle.,.\rangle_{\mathcal{A}} :\mathcal{H}\times\mathcal{H}\rightarrow\mathcal{A}$, such that is sesquilinear, positive definite and respects the module action. In the other words,
	\begin{itemize}
		\item [(i)] $ \langle x,x\rangle_{\mathcal{A}}\geq0 $ for all $ x\in\mathcal{H} $ and $ \langle x,x\rangle_{\mathcal{A}}=0$ if and only if $x=0$.
		\item [(ii)] $\langle ax+y,z\rangle_{\mathcal{A}}=a\langle x,z\rangle_{\mathcal{A}}+\langle y,z\rangle_{\mathcal{A}}$ for all $a\in\mathcal{A}$ and $x,y,z\in\mathcal{H}$.
		\item[(iii)] $ \langle x,y\rangle_{\mathcal{A}}=\langle y,x\rangle_{\mathcal{A}}^{\ast} $ for all $x,y\in\mathcal{H}$.
	\end{itemize}	 
	For $x\in\mathcal{H}, $ we define $||x||=||\langle x,x\rangle_{\mathcal{A}}||^{\frac{1}{2}}$. If $\mathcal{H}$ is complete with $||.||$, it is called a Hilbert $\mathcal{A}$-module or a Hilbert $C^{\ast}$-module over $\mathcal{A}$. For every $a$ in $C^{\ast}$-algebra $\mathcal{A}$, we have $|a|=(a^{\ast}a)^{\frac{1}{2}}$ and the $\mathcal{A}$-valued norm on $\mathcal{H}$ is defined by $|x|=\langle x, x\rangle_{\mathcal{A}}^{\frac{1}{2}}$ for $x\in\mathcal{H}$.
	
	Let $\mathcal{H}$ and $\mathcal{K}$ be two Hilbert $\mathcal{A}$-modules, A map $T:\mathcal{H}\rightarrow\mathcal{K}$ is said to be adjointable if there exists a map $T^{\ast}:\mathcal{K}\rightarrow\mathcal{H}$ such that $\langle Tx,y\rangle_{\mathcal{A}}=\langle x,T^{\ast}y\rangle_{\mathcal{A}}$ for all $x\in\mathcal{H}$ and $y\in\mathcal{K}$.
	
We reserve the notation $End_{\mathcal{A}}^{\ast}(\mathcal{H},\mathcal{K})$ for the set of all adjointable operators from $\mathcal{H}$ to $\mathcal{K}$ and $End_{\mathcal{A}}^{\ast}(\mathcal{H},\mathcal{H})$ is abbreviated to $End_{\mathcal{A}}^{\ast}(\mathcal{H})$.
\end{definition}  
\begin{lemma} \label{1} \cite{Ara}.
	Let $\mathcal{H}$ and $\mathcal{K}$ two Hilbert $\mathcal{A}$-modules and $T\in End_{\mathcal{A}}^{\ast}(\mathcal{H})$. Then the following statements are equivalente:
	\begin{itemize}
		\item [(i)] $T$ is surjective.
		\item [(ii)] $T^{\ast}$ is bounded below with respect to norm, i.e, there is $m>0$ such that $\|T^{\ast}x\|\geq m\|x\|$, $x\in\mathcal{K}$.
		\item [(iii)] $T^{\ast}$ is bounded below with respect to the inner product, i.e, there is $m'>0$ such that, $$\langle T^{\ast}x,T^{\ast}x\rangle_\mathcal{A}\geq m'\langle x,x\rangle_\mathcal{A} , x\in\mathcal{K}$$
	\end{itemize}
\end{lemma}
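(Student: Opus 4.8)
The plan is to prove the cyclic chain $(iii)\Rightarrow(ii)\Rightarrow(i)\Rightarrow(iii)$, treating surjectivity as the hinge. Throughout I read $T$ as an adjointable operator $T\in End_{\mathcal{A}}^{\ast}(\mathcal{H},\mathcal{K})$, so that $T^{\ast}\in End_{\mathcal{A}}^{\ast}(\mathcal{K},\mathcal{H})$ and the vectors $x$ in $(ii)$ and $(iii)$ lie in $\mathcal{K}$ (the bare $End_{\mathcal{A}}^{\ast}(\mathcal{H})$ in the statement should be read this way).

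The implication $(iii)\Rightarrow(ii)$ is the quickest. Applying the $C^{\ast}$-norm to the operator inequality $\langle T^{\ast}x,T^{\ast}x\rangle_{\mathcal{A}}\geq m'\langle x,x\rangle_{\mathcal{A}}$ and using that the norm is monotone on positive elements (if $0\leq a\leq b$ then $\|a\|\leq\|b\|$) gives $\|T^{\ast}x\|^{2}=\|\langle T^{\ast}x,T^{\ast}x\rangle_{\mathcal{A}}\|\geq m'\|\langle x,x\rangle_{\mathcal{A}}\|=m'\|x\|^{2}$, so $(ii)$ holds with $m=\sqrt{m'}$. For $(ii)\Rightarrow(i)$ I would first note that a norm bound below forces $T^{\ast}$ to be injective and to have closed range. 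The decisive input is the closed-range theorem for adjointable operators on Hilbert $C^{\ast}$-modules: if $\ran(T^{\ast})$ is closed then so is $\ran(T)$, both ranges are orthogonally complemented, and $\mathcal{K}=\ran(T)\oplus\ker(T^{\ast})$. Since $\ker(T^{\ast})=\{0\}$ by the bound, this forces $\ran(T)=\mathcal{K}$, i.e. $T$ is surjective.

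Finally $(i)\Rightarrow(iii)$. Writing $\langle T^{\ast}x,T^{\ast}x\rangle_{\mathcal{A}}=\langle TT^{\ast}x,x\rangle_{\mathcal{A}}$ reduces the claim to showing that the positive operator $TT^{\ast}\in End_{\mathcal{A}}^{\ast}(\mathcal{K})$ satisfies $TT^{\ast}\geq\varepsilon\,\id$ for some $\varepsilon>0$, since that inequality unwinds to $\langle TT^{\ast}x,x\rangle_{\mathcal{A}}\geq\varepsilon\langle x,x\rangle_{\mathcal{A}}$. I would obtain it in two steps: surjectivity of $T$ transfers to $TT^{\ast}$ (for an adjointable operator with closed range one has $\ran(TT^{\ast})=\ran(T)=\mathcal{K}$), and a positive surjective adjointable operator is automatically bounded below, hence invertible, so its spectrum is bounded away from $0$ and $TT^{\ast}\geq\varepsilon\,\id$ as required.

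The main obstacle is that none of the Hilbert-space reflexes transfer for free: closed submodules need not be orthogonally complemented, and bounded below does not imply invertible at the level of elementary estimates. The whole argument therefore rests on the two structural facts quoted above — the closed-range/complementation theorem and the invertibility of positive surjective operators — which are exactly the places where the $C^{\ast}$-module machinery, rather than direct manipulation of inequalities, must be invoked.
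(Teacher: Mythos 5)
The paper gives no proof of this lemma: it is imported verbatim from the cited reference [Ara] (and ultimately rests on Lance's closed-range theorem), so there is no in-paper argument to compare against. Your proof is correct and is essentially the standard one — norm monotonicity on positive elements for $(iii)\Rightarrow(ii)$, the closed-range/orthogonal-complementation theorem for $(ii)\Rightarrow(i)$, and invertibility of $TT^{\ast}$ for $(i)\Rightarrow(iii)$; the only loose phrasing is in the last step, where the correct order of deduction is that a positive surjective operator is injective (its kernel is the orthogonal complement of its range), hence bijective, hence invertible by the open mapping theorem, and only then bounded below with spectrum separated from $0$.
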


\begin{lemma} \label{2}\cite{Pas} 
	Let $\mathcal{H}$ and $\mathcal{K}$ two Hilbert $\mathcal{A}$-modules and $T\in End_{\mathcal{A}}^{\ast}(\mathcal{H})$. Then the following statements are equivalente,
	\begin{itemize}
		\item [(i)] The operator T is bounded and $\mathcal{A}$-linear.
		\item  [(ii)]	There exist $0\leq k$ such that
		\begin{equation*}
		\langle Tx, Tx \rangle_\mathcal{A}\leq  k \langle x, x \rangle_\mathcal{A}  \qquad x\in \mathcal{H}.
		\end{equation*}
		
	\end{itemize}	
\end{lemma}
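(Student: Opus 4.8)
The plan is to prove the two implications separately, using the order theory of positive elements in a $C^\ast$-algebra together with the identity $\langle Tx,Tx\rangle_\mathcal{A}=\langle T^\ast Tx,x\rangle_\mathcal{A}$ that comes from adjointness.

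For the implication (ii) $\Rightarrow$ (i), I would start from the hypothesis $\langle Tx,Tx\rangle_\mathcal{A}\leq k\langle x,x\rangle_\mathcal{A}$ and pass to the $C^\ast$-norm. Since the norm on a $C^\ast$-algebra is monotone on positive elements, i.e. $0\leq a\leq b$ forces $\|a\|\leq\|b\|$, taking norms yields $\|Tx\|^2=\|\langle Tx,Tx\rangle_\mathcal{A}\|\leq k\,\|\langle x,x\rangle_\mathcal{A}\|=k\|x\|^2$, so that $\|Tx\|\leq\sqrt{k}\,\|x\|$ and $T$ is bounded with $\|T\|\leq\sqrt{k}$. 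The $\mathcal{A}$-linearity is then immediate, because $T\in End_{\mathcal{A}}^{\ast}(\mathcal{H})$ is adjointable and every adjointable operator is automatically $\mathcal{A}$-linear; alternatively this follows directly from the defining relation $\langle Tx,y\rangle_\mathcal{A}=\langle x,T^\ast y\rangle_\mathcal{A}$.

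For the harder implication (i) $\Rightarrow$ (ii), I would work inside the unital $C^\ast$-algebra $B=End_{\mathcal{A}}^{\ast}(\mathcal{H})$. The element $T^\ast T\in B$ is positive with $\|T^\ast T\|=\|T\|^2$; setting $k=\|T\|^2$, a standard functional-calculus fact in a unital $C^\ast$-algebra gives $a\leq\|a\|\,I$ for every positive $a$, and applied to $a=T^\ast T$ this yields the operator inequality $T^\ast T\leq k\,I$ in $B$, that is, $k\,I-T^\ast T$ is a positive operator. The crucial step is to transfer this operator inequality into a module-valued inequality: a positive operator $P\in B$ satisfies $\langle Px,x\rangle_\mathcal{A}\geq0$ for every $x\in\mathcal{H}$. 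Applying this to $P=k\,I-T^\ast T$ gives $\langle(k\,I-T^\ast T)x,x\rangle_\mathcal{A}\geq0$, which rearranges to $\langle T^\ast Tx,x\rangle_\mathcal{A}\leq k\langle x,x\rangle_\mathcal{A}$; since $\langle T^\ast Tx,x\rangle_\mathcal{A}=\langle Tx,Tx\rangle_\mathcal{A}$ by adjointness, this is exactly (ii).

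The main obstacle I anticipate is precisely this transfer between the two notions of positivity, namely positivity of $k\,I-T^\ast T$ as an element of the operator $C^\ast$-algebra $B$ versus the pointwise $\mathcal{A}$-valued inequality $\langle Tx,Tx\rangle_\mathcal{A}\leq k\langle x,x\rangle_\mathcal{A}$. I would justify it either by invoking the standard equivalence that $P\geq0$ in $End_{\mathcal{A}}^{\ast}(\mathcal{H})$ iff $P=Q^\ast Q$ iff $\langle Px,x\rangle_\mathcal{A}\geq0$ for all $x$, or, more explicitly, by writing $P=Q^\ast Q$ and observing that $\langle Px,x\rangle_\mathcal{A}=\langle Qx,Qx\rangle_\mathcal{A}\geq0$ follows at once from axiom (i) of the $\mathcal{A}$-valued inner product.
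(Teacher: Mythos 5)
The paper offers no proof of this lemma at all: it is quoted from Paschke's article \cite{Pas}, where the underlying result is established for an arbitrary bounded $\mathcal{A}$-linear map by localization with states, i.e.\ one checks that for each state $\varphi$ of $\mathcal{A}$ the form $(x,y)\mapsto\varphi(\langle x,y\rangle_{\mathcal{A}})$ is a semi-inner product on which $T$ acts with norm at most $\|T\|$, and then uses the fact that $a\geq 0$ in $\mathcal{A}$ if and only if $\varphi(a)\geq 0$ for every state $\varphi$. Your argument takes a genuinely different and more algebraic route, and it is correct under the hypothesis actually written in the statement, namely $T\in End_{\mathcal{A}}^{\ast}(\mathcal{H})$: adjointability lets you form $T^{\ast}T$ inside the unital $C^{\ast}$-algebra $End_{\mathcal{A}}^{\ast}(\mathcal{H})$, the functional calculus gives $T^{\ast}T\leq\|T\|^{2}I$, and the positivity of $kI-T^{\ast}T$ transfers correctly to the module-valued inequality through the factorization $P=Q^{\ast}Q$ and axiom (i) of the inner product; the converse direction via monotonicity of the norm on positive elements is equally sound. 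What your route buys is brevity and self-containedness; what it costs is generality, since it would not prove Paschke's original statement for a bounded module map that fails to be adjointable, because $T^{\ast}T$ is then unavailable and one must fall back on the state-localization argument. A final cosmetic remark: as reproduced in the paper the lemma is nearly vacuous, because every adjointable operator is automatically bounded and $\mathcal{A}$-linear, so (i) holds for free; the substantive content is the implication (i)$\Rightarrow$(ii) with the explicit constant $k=\|T\|^{2}$, which is exactly the part you prove correctly.
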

For the following theorem, R(T) denote the range of the operator T.
\begin{theorem} \label{3} \cite{Zha}
	Let $\mathcal{H}$ be a Hilbert $\mathcal{A}$-module over a $C^{\ast}$-algebra $\mathcal{A}$ and let $T, S$ two operators for $End_{\mathcal{A}}^{\ast}(\mathcal{H})$. If $R(S)$ is closed, then the following statements are equivalent:
	\begin{itemize}
		\item [(i)] $R(T)\subset R(S)$.
		\item [(ii)] $ TT^{\ast}\leq \lambda^{2} SS^{\ast}$ for some $\lambda\geq0$.
		\item [(iii)] There exists $Q\in End_{\mathcal{A}}^{\ast}(\mathcal{H})$ such that $T=SQ$.
	\end{itemize}
\end{theorem}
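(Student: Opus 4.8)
The plan is to prove the cycle (ii) $\Rightarrow$ (i) $\Rightarrow$ (iii) $\Rightarrow$ (ii), so that the three statements become equivalent. Throughout, the decisive structural input is the hypothesis that $R(S)$ is closed: for an adjointable operator on a Hilbert $\mathcal{A}$-module this forces $R(S)$ to be orthogonally complemented, with $\mathcal{H} = R(S) \oplus \ker S^{\ast}$ and $(\ker S^{\ast})^{\perp} = R(S)$, and moreover guarantees that $S$ admits a bounded adjointable generalized (Moore--Penrose) inverse $S^{\dagger} \in End_{\mathcal{A}}^{\ast}(\mathcal{H})$ with $SS^{\dagger}$ equal to the orthogonal projection onto $R(S)$. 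These two facts are precisely what replaces the free use of orthogonal complements available in the classical Hilbert space Douglas theorem, and supplying and justifying them is the main obstacle, since in a general Hilbert $C^{\ast}$-module a closed submodule need not be orthogonally complemented.

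For (iii) $\Rightarrow$ (ii), I would substitute $T = SQ$ and compute $TT^{\ast} = SQQ^{\ast}S^{\ast}$. Since $Q$ is bounded and adjointable, Lemma \ref{2} applied to $Q^{\ast}$ gives $QQ^{\ast} \leq \|Q\|^{2}\,\id$, whence for every $z \in \mathcal{H}$
\begin{equation*}
\langle TT^{\ast}z, z\rangle_{\mathcal{A}} = \langle QQ^{\ast}S^{\ast}z, S^{\ast}z\rangle_{\mathcal{A}} \leq \|Q\|^{2}\langle S^{\ast}z, S^{\ast}z\rangle_{\mathcal{A}} = \|Q\|^{2}\langle SS^{\ast}z, z\rangle_{\mathcal{A}},
\end{equation*}
so that $TT^{\ast} \leq \lambda^{2}SS^{\ast}$ with $\lambda = \|Q\|$.

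For (ii) $\Rightarrow$ (i), I would first rewrite the inequality $TT^{\ast} \leq \lambda^{2}SS^{\ast}$ in inner-product form, namely $\langle T^{\ast}z, T^{\ast}z\rangle_{\mathcal{A}} \leq \lambda^{2}\langle S^{\ast}z, S^{\ast}z\rangle_{\mathcal{A}}$ for all $z \in \mathcal{H}$. Taking $z = w \in \ker S^{\ast}$ forces $\langle T^{\ast}w, T^{\ast}w\rangle_{\mathcal{A}} = 0$, hence $T^{\ast}w = 0$; thus $\ker S^{\ast} \subseteq \ker T^{\ast}$. Consequently, for any $x \in \mathcal{H}$ and any $w \in \ker S^{\ast}$ one has $\langle Tx, w\rangle_{\mathcal{A}} = \langle x, T^{\ast}w\rangle_{\mathcal{A}} = 0$, so $R(T) \subseteq (\ker S^{\ast})^{\perp}$. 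Invoking the closedness of $R(S)$ to identify $(\ker S^{\ast})^{\perp} = R(S)$ then yields $R(T) \subseteq R(S)$.

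Finally, for (i) $\Rightarrow$ (iii), I would use the generalized inverse $S^{\dagger}$ furnished by the closed-range hypothesis and set $Q = S^{\dagger}T$, which lies in $End_{\mathcal{A}}^{\ast}(\mathcal{H})$ as a composition of adjointable operators. For each $x \in \mathcal{H}$ the inclusion $R(T) \subseteq R(S)$ gives $Tx \in R(S)$, and since $SS^{\dagger}$ is the orthogonal projection onto $R(S)$ we obtain $SQx = SS^{\dagger}Tx = Tx$; hence $T = SQ$. One could instead mimic the classical Douglas construction by defining $Q_{0}(S^{\ast}z) = T^{\ast}z$ on $R(S^{\ast})$, which the inequality in (ii) shows is well defined and bounded by $\lambda$, and then extend it; but the generalized-inverse route keeps $Q$ adjointable and defined on all of $\mathcal{H}$, sidestepping the extension and complementation subtlety that has no automatic analogue in Hilbert $C^{\ast}$-modules.
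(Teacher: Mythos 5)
The paper does not prove this theorem at all: it is imported verbatim from the cited reference [Zha] (Zhang's factor decomposition theorem for bounded generalized inverse modules) and used as a black box, so there is no in-paper argument to compare yours against. On its own terms your cycle (iii)~$\Rightarrow$~(ii)~$\Rightarrow$~(i)~$\Rightarrow$~(iii) is sound. The step (iii)~$\Rightarrow$~(ii) via $QQ^{\ast}\leq\|Q\|^{2}\id$ is the standard computation; (ii)~$\Rightarrow$~(i) correctly exploits that the inequality kills $T^{\ast}$ on $\ker S^{\ast}$ and that closedness of $R(S)$ gives $(\ker S^{\ast})^{\perp}=R(S)$ --- and you are right that without the closed-range hypothesis this implication would fail (one only gets $R(T)\subseteq\overline{R(S)}$, as the Hilbert-space example of a dense-range compact $S$ shows); (i)~$\Rightarrow$~(iii) via $Q=S^{\dagger}T$ is clean and keeps $Q$ adjointable. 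The two unproved inputs you flag --- that a closed-range adjointable operator has orthogonally complemented range with $\mathcal{H}=R(S)\oplus\ker S^{\ast}$, and that it admits an adjointable Moore--Penrose inverse with $SS^{\dagger}$ the projection onto $R(S)$ --- are genuinely the crux in the module setting, but both are standard (Lance's closed-range theorem and the generalized-inverse literature that [Zha] itself belongs to), so citing them rather than proving them is legitimate. The only cosmetic caveat is that $\lambda=\|Q\|$ may vanish when $Q=0$, which the statement's $\lambda\geq 0$ already accommodates.
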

\section{\textbf{Continuous Controlled K-Frame for Hilbert $C^{\ast}$-Modules}}
Let $X$ be a Banach space, $(\Omega,\mu)$ a measure space, and  $f:\Omega\to X$ a measurable function. Integral of the Banach-valued function $f$ has been defined by Bochner and others. Most properties of this integral are similar to those of the integral of real-valued functions. Since every $C^{\ast}$-algebra and Hilbert $C^{\ast}$-module is a Banach space thus we can use this integral and its properties.

Let $\mathcal{H}$ and $\mathcal{K}$ be two Hilbert $C^{\ast}$-modules, $\{\mathcal{K}_{w}: w\in\Omega\}$  is a family of subspaces of $\mathcal{K}$, and $End_{\mathcal{A}}^{\ast}(\mathcal{H},\mathcal{K}_{w})$ is the collection of all adjointable $\mathcal{A}$-linear maps from $\mathcal{H}$ into $\mathcal{K}_{w}$.
We define
\begin{equation*}
	\oplus_{w\in\Omega}\mathcal{K}_{w}=\{x=\{x_{w}\}_{w\in\Omega}: x_{w}\in \mathcal{K}_{w}, \int_{\Omega}\|x_{w}\|^{2}d\mu(w)<\infty\}.
\end{equation*}
For any $x=\{x_{w}: w\in\Omega\}$ and $y=\{y_{w}: w\in\Omega\}$, if the $\mathcal{A}$-valued inner product is defined by $\langle x,y\rangle_\mathcal{A}=\int_{\Omega}\langle x_{w},y_{w}\rangle_\mathcal{A} d\mu(w)$, the norm is defined by $\|x\|=\|\langle x,x\rangle_\mathcal{A}\|^{\frac{1}{2}}$. Therefore, $\oplus_{w\in\Omega}\mathcal{K}_{w}$ is a Hilbert $C^{\ast}$-module(see [14]).\\
Let $\mathcal{A}$ be a $C^{\ast}$-algebra, $l^2({\mathcal{A}})$ is defined by, 
$$l^2({\mathcal{A}})=\{\{a_\omega\}_{w\in\Omega} \subseteq \mathcal{A} :\|\int_{\Omega}a_{\omega} a_{\omega}^{\ast} d\mu({\omega})\| <\infty\}.$$
$l^2({\mathcal{A}})$ is a Hilbert $C^{\ast}$-module (Hilbert $\mathcal{A}-module$) with pointwise operations and the inner product defined as, 
$$\langle \{a_\omega\}_{w\in\Omega} , \{b_\omega\}_{w\in\Omega}\rangle_\mathcal{A} =\int_{\Omega}a_{\omega} b_{\omega}^{\ast} d\mu({\omega}), 
\{a_\omega\}_{w\in\Omega}, \{b_\omega\}_{w\in\Omega} \in l^2({\mathcal{A}}),$$
and, $$\|\{a_\omega\}_{w\in\Omega}\|=(\int_{\Omega}a_{\omega} a_{\omega}^{\ast} d\mu({\omega}))^{\frac{1}{2}}.$$

\begin{definition} \label{17}                                      
	Let $\mathcal{H}$ be a Hilbert $\mathcal{A}$-module over a unital $C^{\ast}$-algebra, and 
	$K\in End_{\mathcal{A}}^{\ast}(\mathcal{H})$. A mapping F: $\Omega\rightarrow \mathcal{H}$ is called a continuous  K-Frame for $\mathcal{H}$ if :  
	\begin{itemize}
		\item F is weakly-measurable, ie,  for any $f\in \mathcal{H}$, the map
		\\
		$ w\rightarrow \langle f,F(w)\rangle_{\mathcal{A}}$ is  measurable on ${\Omega}$.\\
		\item There exist two strictly positive constants  $A$ and $B$  such that
		\begin{equation} \label{127}
		A\langle  K^{\ast}f, K^{\ast}f\rangle_\mathcal{A} \leq\int_{\Omega}\langle f,F(w)\rangle_\mathcal{A} \langle F(w),f\rangle_\mathcal{A} d{\mu(w)}\leq B\langle f,f\rangle_\mathcal{A} , f\in \mathcal{H}.
		\end{equation}
	\end{itemize}
	The elements $A$ and $B$ are called continuous  K-frame bounds. 
	
	If $A=B$ we call this Continuous  K-Frame a continuous tight  K-Frame, and if $A=B=1$ it is called a continuous Parseval  K-Frame. If only the right-hand inequality of \eqref{127} is satisfied, we call  F a continuous  bessel mapping with Bessel bound $B$.\\
	Let F be a  continuous  bessel mapping for Hilbert $C^{\ast}$- module $\mathcal{H}$ over $\mathcal{A}$.\\
	The operator  T :$ \mathcal{H} \rightarrow l^2(\mathcal{A})$ defined by, $$Tf=\{\langle f, F(\omega)\rangle _\mathcal{A}\}_{\omega \in \Omega},$$
	is called the analysis operator.\\
	There adjoint operator $T^{\ast} : l^2(\mathcal{A})  \rightarrow \mathcal{H}$ given by, $$T^{\ast}(\{a_\omega\}_{\omega \in \Omega})=\int_{\Omega} a_\omega F(\omega) d\mu(\omega),$$
	is called the synthesis operator.\\ By composing T and $T^{\ast}$, we obtain the continuous K-frame operator, $S : \mathcal{H}  \rightarrow \mathcal{H}$ defined by 
	$$Sf=\int_{\Omega}\langle f, F(\omega)\rangle_\mathcal{A} F(\omega) d\mu(\omega).$$
    It's clear to see that $S$ is positive, bounded and selfadjoint (see \cite{11}).

\end{definition}
   
    For the following definition we need to introduce, ${GL^{+}(\mathcal{H})}$ be the set of all positive bounded linear invertible operators on $\mathcal{H}$ with bounded inverse.

\begin{definition} \label{6}                                           
	Let  $\mathcal{H}$ be a Hilbert $\mathcal{A}$-module over a unital $C^{\ast}$-algebra and 
	$K\in End_{\mathcal{A}}^{\ast}(\mathcal{H})$ , $ C\in {GL^{+}(\mathcal{H})}$. A mapping F :$\Omega\rightarrow \mathcal{H}$ is called a continuous C-controlled K-Frame in $\mathcal{H}$ if :  
	\begin{itemize}
		\item F is weakly-measurable, ie,  for any $f\in \mathcal{H}$, the map
		\\
		$ w\rightarrow \langle f,F(w)\rangle_{\mathcal{A}}$ is  measurable on ${\Omega}$.\\
		\item There exists two strictly positive constants  $A$ and $B$  such that
		\begin{equation} \label{7}
		A\langle C^{\frac{1}{2}} K^{\ast}f, C^{\frac{1}{2}}K^{\ast}f\rangle_{\mathcal{A}} \leq\int_{\Omega}\langle f,F(w)\rangle_{\mathcal{A}} \langle CF(w),f\rangle_{\mathcal{A}} d{\mu(w)}\leq B\langle f,f\rangle_{\mathcal{A}} , f\in \mathcal{H}.
		\end{equation}
	\end{itemize}
	The elements $A$ and $B$ are called continuous C-controlled K-frame bounds. 
	
	If $A=B$ we call this continuous C-controlled K-Frame a continuous tight C-Controlled K-Frame, and if $A=B=1$ it is called a continuous Parseval C-Controlled K-Frame. If only the right-hand inequality of \eqref{7} is satisfied, we call  F a continuous C-controlled bessel mapping with Bessel bound $B$.
	
\end{definition}

\begin{example}
\begin{align*}
	H &=\mathcal{A}=l^{2}(\mathbb{C}) \\
	&=\left\{ \left\{ a_{n}\right\} _{n=1}^{\infty }\subset 
	\mathbb{C}
	\text{ / }\sum\limits_{n=1}^{\infty }\left\vert a_{n}\right\vert
	^{2}<+\infty \right\} .
\end{align*}

$\mathcal{A}$ is recognized as a Hilbert $\mathcal{A}$-Module with the $\mathcal{A}$-inner product
\begin{equation*}
<\left\{ a_{n}\right\} _{n=1}^{\infty },\left\{ b_{n}\right\} _{n=1}^{\infty
}>_{\mathcal{A}}=\left\{ a_{n}\overline{b_{n}}\right\} _{n=1}^{\infty }.
\end{equation*}

Consider now the borned linear operator%
\begin{equation*}
\begin{array}{cccc}
C: & H & \rightarrow  & H \\ 
& \left\{ a_{n}\right\} _{n=1}^{\infty } & \longmapsto  & \left\{ \alpha
a_{n}\right\} _{n=1}^{\infty }%
\end{array}%
\end{equation*}
where $\alpha \in 
\mathbb{R}
_{+}^{\ast }$. Then $C$ is positive invertible and%
\begin{equation*}
C^{-1}(\left\{ a_{n}\right\} _{n=1}^{\infty })=\left\{ \alpha
^{-1}a_{n}\right\} _{n=1}^{\infty }.
\end{equation*}

Let $(\Omega ,\mu )$ the measure space where $\Omega =\left[ 0,1\right] $
and $\mu $ is the lebesgue measure and let 
\begin{equation*}
\begin{array}{cccc}
F: & \Omega  & \rightarrow  & H \\ 
& w & \longmapsto  & F_{w}=\left\{ \frac{w}{n}\right\} _{n=1}^{\infty }%
\end{array}%
.
\end{equation*}

In the author hand, consider the projection%
\begin{equation*}
\begin{array}{cccc}
K: & H & \rightarrow  & H \\ 
& \left\{ a_{n}\right\} _{n=1}^{\infty } & \longmapsto  & 
(a_{1},..,a_{r},0,...)%
\end{array}%
\end{equation*}

where $r$ is an integer $(r\geq 2)$.\\It's clair that $\ K^{\ast }=K$ and for
each $f=$ $\left\{ a_{n}\right\} _{n=1}^{\infty }\in H=l^{2}(%
\mathbb{C})$, one has%
\begin{eqnarray*}
	\int_{\Omega } &<&f,F_{w}>_{\mathcal{A}}<CF_{w},f>_{\mathcal{A}}d\mu
	(w)=\int_{\left[ 0,1\right] }\left\{ \frac{w}{n}a_{n}\right\} _{n=1}^{\infty
	}.\left\{ \alpha \frac{w}{n}\overline{a_{n}}\right\} _{n=1}^{\infty }d\mu (w)
	\\
	&=&\int_{\left[ 0,1\right] }\left\{ \alpha \frac{w^{2}}{n^{2}}\left\vert
	a_{n}\right\vert ^{2}\right\} _{n=1}^{\infty }d\mu (w) \\
	&=&\frac{\alpha }{3}\left\{ \frac{\left\vert a_{n}\right\vert ^{2}}{n^{2}}%
	\right\} _{n=1}^{\infty }.
\end{eqnarray*}

Hence%
\begin{equation*}
\int_{\Omega }<f,F_{w}>_{\mathcal{A}}<CF_{w},f>_{\mathcal{A}}d\mu (w)\leq 
\frac{\alpha \pi ^{2}}{18}<\left\{ a_{n}\right\} _{n=1}^{\infty },\left\{
a_{n}\right\} _{n=1}^{\infty }>_{\mathcal{A}}\text{.}
\end{equation*}

Furthermore,%
\begin{eqnarray*}
	&<&CK^{\ast }f,K^{\ast }f>_{\mathcal{A}}=<(\alpha a_{1},..,\alpha
	a_{r},0,...),(a_{1},..,a_{r},0,...)>_{\mathcal{A}} \\
	&=&(\alpha \left\vert a_{1}\right\vert ^{2},..,\alpha \left\vert
	a_{r}\right\vert ^{2},0,...).
\end{eqnarray*}

Then for $A=\frac{1}{3r^{2}},$ one obtain%
\begin{equation*}
\frac{\alpha }{3r^{2}}(\left\vert a_{1}\right\vert ^{2},..,\left\vert
a_{r}\right\vert ^{2},0,...)\leq \left\{ \frac{\alpha }{3}\frac{\left\vert
	a_{n}\right\vert ^{2}}{n^{2}}\right\} _{n=1}^{\infty }.
\end{equation*}

The conclusion is%
\begin{equation*}
\frac{1}{3r^{2}}<C^{1/2}K^{\ast }f,C^{1/2}K^{\ast }f>_{\mathcal{A}}\leq \int_{\Omega
}<f,F_{w}>_{\mathcal{A}}<CF_{w},f>_{\mathcal{A}}d\mu (w)\leq \frac{\alpha
	\pi ^{2}}{18}<f,f>_{\mathcal{A}}
\end{equation*}

\bigskip

\bigskip

\end{example}

    Let F be a  continuous C-controlled bessel mapping for Hilbert $C^{\ast}$- module $\mathcal{H}$ over $\mathcal{A}$.\\
    We define the  operator frame \\
    
   $S_C : \mathcal{H}  \rightarrow \mathcal{H}$ by, 
   $$S_Cf=\int_{\Omega}\langle f, F(\omega)\rangle_{\mathcal{A}} CF(\omega) d\mu(\omega).$$
   \begin{remark}
    From definition of $S$ and $S_C$, we have, $S_C=CS$.\\
   Using \cite{28} ,  $ S_{C}$ is $\mathcal{A}$-linear and bounded. Thus, it is adjointable.\\
   Since $\langle S_{C}x,x\rangle_{\mathcal{A}} \geq 0$, for any $x\in \mathcal{H}$, it result, again from \cite{28}, that  $ S_{C}$ is positive and selfadjoint.
   \end{remark}

\begin{theorem}\label{555}
	Let $\mathcal{\mathcal{H}}$ be a Hilbert $\mathcal{A}$-module,
	$K\in End_{\mathcal{A}}^{\ast}(\mathcal{H})$,  and $C\in {GL^{+}(\mathcal{H})}$. Let F : $\Omega\rightarrow \mathcal{H}$ a map. Suppose that $CK=KC$, $R(C^{\frac{1}{2}})\subset R(K^{\ast}C^{\frac{1}{2}})$ with $R(K^{\ast}C^{\frac{1}{2}})$ is closed.	Then F is a continuous C-controlled K-frame for $\mathcal{H}$ if and only if there exist two constants $0< A, B<{\infty}$ such that :
	\begin{equation}\label{6}
	A\| C^{\frac{1}{2}} K^{\ast}f\|^{2}\leq\|\int_{\Omega}\langle f,F(w)\rangle_{\mathcal{A}} \langle CF(w),f\rangle_{\mathcal{A}} d\mu(w)\|\leq B\|f\|^{2},  {f\in \mathcal{H}}.
	\end{equation}
	
\end{theorem}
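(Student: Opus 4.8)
The plan is to show that the two $\mathcal{A}$-valued inequalities defining a continuous $C$-controlled $K$-frame, namely \eqref{7}, are equivalent to the two scalar inequalities \eqref{6}, handling the two implications separately. Throughout I would keep in mind that the middle term equals $\langle S_{C}f,f\rangle_{\mathcal{A}}$, which by the preceding Remark is a positive, selfadjoint quantity since $S_{C}=CS$ is positive and selfadjoint; this is what makes the passage between the module-valued and the scalar formulations possible.

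For the direction ($\Rightarrow$), assume F satisfies \eqref{7}. Since $0\leq a\leq b$ in a $C^{\ast}$-algebra forces $\|a\|\leq\|b\|$, and since $\|\langle x,x\rangle_{\mathcal{A}}\|=\|x\|^{2}$, I would simply apply $\|\cdot\|$ to the chain \eqref{7}: the left bound gives $A\|C^{\frac{1}{2}}K^{\ast}f\|^{2}\leq\|\langle S_{C}f,f\rangle_{\mathcal{A}}\|$ and the right bound gives $\|\langle S_{C}f,f\rangle_{\mathcal{A}}\|\leq B\|f\|^{2}$, which is exactly \eqref{6}. This direction is routine and uses none of the structural hypotheses.

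The substantive direction is ($\Leftarrow$). Assuming \eqref{6}, the upper bound is easy: from $\|\langle S_{C}f,f\rangle_{\mathcal{A}}\|=\|S_{C}^{\frac{1}{2}}f\|^{2}\leq B\|f\|^{2}$ I read off that $S_{C}^{\frac{1}{2}}$, hence $S_{C}$, is bounded, and then Lemma \ref{2} applied to $S_{C}^{\frac{1}{2}}$ yields $\langle S_{C}f,f\rangle_{\mathcal{A}}=\langle S_{C}^{\frac{1}{2}}f,S_{C}^{\frac{1}{2}}f\rangle_{\mathcal{A}}\leq\|S_{C}^{\frac{1}{2}}\|^{2}\langle f,f\rangle_{\mathcal{A}}\leq B\langle f,f\rangle_{\mathcal{A}}$, the right-hand operator inequality of \eqref{7}. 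The lower bound is the heart of the matter. Using $CK=KC$ (whence $CK^{\ast}=K^{\ast}C$ by taking adjoints, and therefore $C^{\frac{1}{2}}K^{\ast}=K^{\ast}C^{\frac{1}{2}}$ by functional calculus), I would recast the target inequality $A\langle C^{\frac{1}{2}}K^{\ast}f,C^{\frac{1}{2}}K^{\ast}f\rangle_{\mathcal{A}}\leq\langle S_{C}f,f\rangle_{\mathcal{A}}$ as an operator domination $A\,(KC^{\frac{1}{2}})(KC^{\frac{1}{2}})^{\ast}\leq S_{C}^{\frac{1}{2}}(S_{C}^{\frac{1}{2}})^{\ast}$, that is $TT^{\ast}\leq\lambda^{2}SS^{\ast}$ with $T:=KC^{\frac{1}{2}}$, $S:=S_{C}^{\frac{1}{2}}$ and $\lambda^{2}=1/A$. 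By Theorem \ref{3} this is equivalent to $R(T)\subseteq R(S)$ once $R(S)$ is closed, so the whole problem reduces to promoting the scalar lower estimate of \eqref{6} into this range inclusion.

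I expect that promotion to be the main obstacle: in a Hilbert $C^{\ast}$-module the norm domination $\sqrt{A}\,\|C^{\frac{1}{2}}K^{\ast}f\|\leq\|S_{C}^{\frac{1}{2}}f\|$ does not by itself yield the inner-product domination, which is precisely why Theorem \ref{3} is stated only under a closed-range hypothesis. This is where the assumptions $R(C^{\frac{1}{2}})\subseteq R(K^{\ast}C^{\frac{1}{2}})$ and the closedness of $R(K^{\ast}C^{\frac{1}{2}})$ enter. Applying Theorem \ref{3} to the pair $(C^{\frac{1}{2}},K^{\ast}C^{\frac{1}{2}})$ first produces a factorization $C^{\frac{1}{2}}=K^{\ast}C^{\frac{1}{2}}Q$ and the operator bound $C\leq\lambda^{2}K^{\ast}CK$; combining these with the commutativity and the scalar lower bound of \eqref{6}, I would establish that $R(S_{C}^{\frac{1}{2}})$ is closed and that $R(KC^{\frac{1}{2}})\subseteq R(S_{C}^{\frac{1}{2}})$, after which Theorem \ref{3} returns $TT^{\ast}\leq\lambda^{2}SS^{\ast}$ and hence the lower operator inequality of \eqref{7}. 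I anticipate that the delicate bookkeeping linking the range of $K^{\ast}C^{\frac{1}{2}}$ to that of the controlled frame operator $S_{C}^{\frac{1}{2}}$, rather than any single estimate, is where the genuine work resides.
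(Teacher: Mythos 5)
Your forward direction and your treatment of the upper bound are fine and match the paper (apply the norm to the $\mathcal{A}$-valued inequality for ($\Rightarrow$); for the upper bound write $\langle S_Cf,f\rangle_{\mathcal{A}}=\langle S_C^{1/2}f,S_C^{1/2}f\rangle_{\mathcal{A}}$ and invoke Lemma \ref{2}). The problem is the lower bound, which you correctly identify as the crux but then do not actually prove. Your plan is to reduce it to the range inclusion $R(KC^{1/2})\subseteq R(S_C^{1/2})$ together with closedness of $R(S_C^{1/2})$, and then to invoke Theorem \ref{3}. But neither of these two facts is among the hypotheses, and the sentence ``I would establish that $R(S_C^{1/2})$ is closed and that $R(KC^{1/2})\subseteq R(S_C^{1/2})$'' is precisely the step that carries all the difficulty: the scalar estimate $A\|C^{1/2}K^{\ast}f\|^2\leq\|S_C^{1/2}f\|^2$ gives no control of $R(S_C^{1/2})$ from below unless $C^{1/2}K^{\ast}$ is itself bounded below, which is not assumed, and in general $R(S_C^{1/2})$ has no reason to be closed. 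Since Theorem \ref{3} needs the closed-range hypothesis on the dominating operator to pass from a range inclusion to the operator inequality $TT^{\ast}\leq\lambda^2SS^{\ast}$, your reduction leaves the argument circular at exactly the point where the norm inequality must be upgraded to an inner-product inequality. Also note that even if the reduction worked, Theorem \ref{3} only returns \emph{some} constant $\lambda$, not the specific value $1/A$ your recasting suggests (harmless for the statement, but worth saying).

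For comparison, the paper applies Theorem \ref{3} only to the pair $(C^{1/2},\,K^{\ast}C^{1/2})$, where the closed-range hypothesis \emph{is} assumed, obtaining $\|C^{1/2}f\|^2\leq m\|K^{\ast}C^{1/2}f\|^2$; it then chains this with the scalar lower bound (using $K^{\ast}C^{1/2}=C^{1/2}K^{\ast}$ from $CK=KC$) to get
\begin{equation*}
\sqrt{\tfrac{A}{m}}\,\|C^{1/2}f\|\leq\|(CS)^{1/2}f\|,\qquad f\in\mathcal{H},
\end{equation*}
converts this norm domination into the inner-product domination $\sqrt{A/m}\,\langle C^{1/2}f,C^{1/2}f\rangle_{\mathcal{A}}\leq\langle S_Cf,f\rangle_{\mathcal{A}}$ via Lemma \ref{1}, and finally replaces $C^{1/2}f$ by $C^{1/2}K^{\ast}f$ at the cost of a factor $\|K^{\ast}\|^{2}$ (Lemma \ref{2} applied to $K^{\ast}$), arriving at the lower frame bound $\tfrac{1}{\|K^{\ast}\|^{2}}\sqrt{A/m}$. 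If you want to complete your write-up, you should either follow that route or supply an actual proof of the closedness and inclusion claims you are relying on; as it stands the proposal has a genuine gap.
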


\begin{proof}
	($\Longrightarrow$) obvious.\\
	For the converse, we suppose that 
	$0< A,  B<{\infty}$ such that :
	$$A\| C^{\frac{1}{2}} K^{\ast}f\|^{2}\leq\|\int_{\Omega}\langle f,F(w)\rangle_{\mathcal{A}} \langle CF(w),f\rangle_{\mathcal{A}} d\mu(w)\|\leq B\|f\|^{2},  {f\in \mathcal{H}}.$$
	We have, 
	
	\begin{align*}
	\|\int_{\Omega}\langle f,F(w)\rangle_{\mathcal{A}} \langle CF(w),f\rangle_{\mathcal{A}} d\mu(w)\|&=\|  \langle S_Cf,f\rangle_{\mathcal{A}}\|\\
	&=\|  \langle CSf,f\rangle_{\mathcal{A}} \|\\
	&=\|  \langle (CS)^{\frac{1}{2}}f,(CS)^{\frac{1}{2}}f\rangle_{\mathcal{A}} \|\\
	&=\|   (CS)^{\frac{1}{2}}f \|^{2}.
	\end{align*}

	Since,    $R(C^{\frac{1}{2}}) \subset R(K^{\ast}C^{\frac{1}{2}})  $ with
	$R(K^{\ast}C^{\frac{1}{2}})$ is closed, then by  theorem \ref{3}, there exists $0 \leq m$ such that,
	$$(C^{\frac{1}{2}})(C^{\frac{1}{2}})^{\ast}\leq m (K^{\ast}C^{\frac{1}{2}})(K^{\ast}C^{\frac{1}{2}})^{\ast}.$$
	 Thus, 
	 $$\langle (C^{\frac{1}{2}})(C^{\frac{1}{2}})^{\ast}f, f\rangle_{\mathcal{A}}\leq m \langle (K^{\ast}C^{\frac{1}{2}})(K^{\ast}C^{\frac{1}{2}})^{\ast}f,f\rangle_{\mathcal{A}}  .$$
	 Consequently,
	$$\|C^{\frac{1}{2}}f \|^{2}\leq m\|K^{\ast} C^{\frac{1}{2}}f \|^{2}.$$
	Then, $$A\|C^{\frac{1}{2}}f \|^{2}\leq A m\|K^{\ast} C^{\frac{1}{2}}f \|^{2}\leq m\|(CS)^{\frac{1}{2}}f \|^{2} .$$
	Hence,
	 $$\frac{A}{m}\|C^{\frac{1}{2}}f \|^{2}\leq\|(CS)^{\frac{1}{2}}f \|^{2}.$$
	So, 
	\begin{equation}\label{8}
		\sqrt\frac{A}{m}\|C^{\frac{1}{2}}f \|\leq\|(CS)^{\frac{1}{2}}f \|.	
	\end{equation}

	From lemma\ref{1}, we have,$$\sqrt\frac{A}{m}\langle C^{\frac{1}{2}}f,C^{\frac{1}{2}}f\rangle_{\mathcal{A}}\leq\langle C^{\frac{1}{2}}S^{\frac{1}{2}}f,C^{\frac{1}{2}}S^{\frac{1}{2}}f\rangle_{\mathcal{A}}.$$
	Then, $$\langle C^{\frac{1}{2}}f,C^{\frac{1}{2}}f\rangle_{\mathcal{A}}\leq\sqrt\frac{m}{A}\langle CSf,f\rangle_{\mathcal{A}}.$$
	So, $$\langle C^{\frac{1}{2}}f,C^{\frac{1}{2}}f\rangle_{\mathcal{A}}\leq\sqrt\frac{m}{A}\langle S_Cf,f\rangle_{\mathcal{A}} .$$
	One the deduce
	 $$\langle C^{\frac{1}{2}}K^{\ast}f,C^{\frac{1}{2}}K^{\ast}f\rangle_{\mathcal{A}}\leq\|K^{\ast}\|^2\langle C^{\frac{1}{2}}f,C^{\frac{1}{2}}f\rangle_{\mathcal{A}}\leq\|K^{\ast}\|^2\sqrt\frac{m}{A}\langle S_Cf,f\rangle_{\mathcal{A}}.$$
	 Hence,
	 \begin{equation}\label{1a}
	 	\frac{1}{\|K^{\ast}\|^2}\sqrt\frac{A}{m}\langle C^{\frac{1}{2}}K^{\ast}f,C^{\frac{1}{2}}K^{\ast}f\rangle_{\mathcal{A}}\leq\langle S_Cf,f\rangle_{\mathcal{A}}.
	 \end{equation}
	  
	Since $S_C$ is positive, selfadjoint and bounded $\mathcal{A}$-linear map, we can write $$\langle S_C^{\frac{1}{2}}f,S_C^{\frac{1}{2}}f\rangle_{\mathcal{A}}=\langle S_Cf,f\rangle_{\mathcal{A}}=\int_\omega\langle f,F(w)\rangle_{\mathcal{A}} \langle CF(w),f\rangle_{\mathcal{A}} d\mu(w). $$
	
	From lemma \ref{2}, there exists $D>0$ such that,
	 $$\langle S_C^{\frac{1}{2}}f,S_C^{\frac{1}{2}}f\rangle_{\mathcal{A}}\leq D \langle f,f\rangle_{\mathcal{A}},$$
	 
	hence,
	\begin{equation}\label{1b}
		\langle S_Cf,f\rangle_{\mathcal{A}}\leq D\langle f,f\rangle_{\mathcal{A}}. 
	\end{equation}
	 
	Therfore by \eqref{1a} and \eqref{1b}, we conclude that F is a continuous $C$-controlled $K$-frame in Hilbert $C^{\ast}$-module $\mathcal{H}$ with frame bounds $\frac{1}{\|K^{\ast}\|^2}\sqrt\frac{A}{m}$ and $D$.\\
	
\end{proof}
\begin{lemma}\label{111}
	Let $C\in GL^{+}(\mathcal{H})$. Suppose  $CS_C=S_CC$ and 
	$R(S_C^{\frac{1}{2}}) \subset R((CS_C)^{\frac{1}{2}})$ with 	$R((CS_C)^{\frac{1}{2}})$ is closed. Then $\|S_{C}^{\frac{1}{2}}f\|^{2} \leq \lambda \|(CS_{C})^{\frac{1}{2}}f\|^{2}$ for some $\lambda \geq0$.
\end{lemma}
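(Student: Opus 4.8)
The plan is to apply Theorem \ref{3} directly, with the two operators being the square roots $S_C^{\frac{1}{2}}$ and $(CS_C)^{\frac{1}{2}}$. The first preliminary step is to justify that both of these square roots are well-defined adjointable (indeed selfadjoint) operators. By the Remark following the definition of $S_C$, the operator $S_C$ is positive and selfadjoint, so its square root $S_C^{\frac{1}{2}}$ exists via continuous functional calculus and is again positive and selfadjoint. For $CS_C$ I would use the commutativity hypothesis $CS_C=S_CC$: this immediately gives $(CS_C)^{\ast}=S_C^{\ast}C^{\ast}=S_CC=CS_C$, so $CS_C$ is selfadjoint, and since $C$ and $S_C$ are commuting positive operators their product is positive; hence $(CS_C)^{\frac{1}{2}}$ is a well-defined positive selfadjoint, in particular adjointable, operator.

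Next I would set $T=S_C^{\frac{1}{2}}$ and $S=(CS_C)^{\frac{1}{2}}$ in Theorem \ref{3}. Because both operators are selfadjoint, one computes $TT^{\ast}=(S_C^{\frac{1}{2}})^{2}=S_C$ and $SS^{\ast}=((CS_C)^{\frac{1}{2}})^{2}=CS_C$. The hypotheses of the lemma state precisely that $R((CS_C)^{\frac{1}{2}})=R(S)$ is closed and $R(S_C^{\frac{1}{2}})=R(T)\subset R(S)$, so the equivalence (i)$\Leftrightarrow$(ii) of Theorem \ref{3} yields a constant $\lambda\geq 0$ with $TT^{\ast}\leq\lambda^{2}SS^{\ast}$, that is, the operator inequality $S_C\leq\lambda^{2}CS_C$.

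Finally I would convert this operator inequality into the asserted norm inequality. Pairing against an arbitrary $f\in\mathcal{H}$ gives $\langle S_Cf,f\rangle_{\mathcal{A}}\leq\lambda^{2}\langle CS_Cf,f\rangle_{\mathcal{A}}$ in $\mathcal{A}^{+}$. Using selfadjointness of the square roots, $\langle S_Cf,f\rangle_{\mathcal{A}}=\langle S_C^{\frac{1}{2}}f,S_C^{\frac{1}{2}}f\rangle_{\mathcal{A}}$ and $\langle CS_Cf,f\rangle_{\mathcal{A}}=\langle (CS_C)^{\frac{1}{2}}f,(CS_C)^{\frac{1}{2}}f\rangle_{\mathcal{A}}$. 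Taking $C^{\ast}$-norms and invoking monotonicity of the norm on positive elements of $\mathcal{A}$ then produces $\|S_C^{\frac{1}{2}}f\|^{2}\leq\lambda^{2}\|(CS_C)^{\frac{1}{2}}f\|^{2}$, which is the claim after renaming $\lambda^{2}$ as the lemma's constant.

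The step I expect to require the most care is the first one, namely confirming that $CS_C$ is positive (not merely selfadjoint), since this is what legitimizes forming $(CS_C)^{\frac{1}{2}}$ and guarantees the computation $SS^{\ast}=CS_C$. This is where the commutativity assumption $CS_C=S_CC$ is essential; without it $CS_C$ need not even be selfadjoint, and Theorem \ref{3} could not be applied with these square roots.
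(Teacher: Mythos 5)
Your proof is correct and follows essentially the same route as the paper's: apply Theorem \ref{3} with $T=S_C^{\frac{1}{2}}$ and $S=(CS_C)^{\frac{1}{2}}$, then pair the resulting operator inequality against $f$ and pass to norms using monotonicity of the $C^{\ast}$-norm on positive elements. Your preliminary check that $CS_C$ is positive and selfadjoint (so that $(CS_C)^{\frac{1}{2}}$ exists and $SS^{\ast}=CS_C$) is a detail the paper silently omits, and it is exactly the justification needed.
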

   \begin{proof} 
   	By theorem\ref{3}, there exists some $\lambda >0$ such that,
   	$$(S_C^{\frac{1}{2}})(S_C^{\frac{1}{2}})^{\ast}\leq \lambda (CS_C^{\frac{1}{2}})(CS_C^{\frac{1}{2}})^{\ast}.$$
   	Hence, $$\langle (S_C^{\frac{1}{2}})(S_C^{\frac{1}{2}})^{\ast}f,f\rangle_{\mathcal{A}} \leq \lambda \langle (CS_C^{\frac{1}{2}})(CS_C^{\frac{1}{2}})^{\ast}f, f\rangle_{\mathcal{A}}. $$
   	So, $$\| S_C^{\frac{1}{2}}f\|^2\leq  \lambda \|(CS_C^{\frac{1}{2}})
   	f\|^2, f \in {\mathcal H}.$$
   \end{proof}

\begin{theorem}\label{11}
Let  F : $\Omega\rightarrow \mathcal{H}$ a map  and $C\in GL^{+}(\mathcal{H})$. Suppose $CS_C=S_CC$ and 
$R(S_C^{\frac{1}{2}}) \subset R((CS_C)^{\frac{1}{2}})$ with 	$R((CS_C)^{\frac{1}{2}})$ is closed. Then F is a continuous 
C-controlled Bessel mapping with bound B if and only if $U:l^2({\mathcal{A}})\rightarrow \mathcal{H}$ defined by 	$U(\{a_{w}\}_{w\in \Omega})=\int_{\Omega}a_w C F(w) d\mu(w)$ is well defined bounded with $\|U\|\leq\sqrt{B}\|C^\frac{1}{2}\|$.
	
\end{theorem}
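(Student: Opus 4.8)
The plan is to reduce everything to the uncontrolled frame operator $S$ and to exploit the identity $S_{C}=CS$ together with the hypothesis $CS_{C}=S_{C}C$. First I would record two preliminary facts. Since $C$ is invertible, $CS_{C}=S_{C}C$ is equivalent to $CS=SC$, so $C^{\frac12}$ commutes with $S_{C}$. Second, a direct computation of the adjoint (using axiom (ii) of the module inner product) gives $U^{\ast}f=\{\langle f,CF(w)\rangle_{\mathcal{A}}\}_{w\in\Omega}$, whence
$$UU^{\ast}f=\int_{\Omega}\langle f,CF(w)\rangle_{\mathcal{A}}\,CF(w)\,d\mu(w)=CSCf=C^{\frac12}S_{C}C^{\frac12}f,$$
the last equality using the commutation. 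Finally, observe that ``$F$ is a continuous $C$-controlled Bessel mapping with bound $B$'' is exactly the inequality $\langle S_{C}f,f\rangle_{\mathcal{A}}\leq B\langle f,f\rangle_{\mathcal{A}}$, i.e. $\|S_{C}\|\leq B$.

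For the forward implication I would start from $\|S_{C}\|\leq B$. To see that $U$ is well defined and bounded, fix $\{a_{w}\}\in l^{2}(\mathcal{A})$ and estimate, for $f\in\mathcal{H}$,
$$\big\|\langle U(\{a_{w}\}),f\rangle_{\mathcal{A}}\big\|=\Big\|\int_{\Omega}a_{w}\langle CF(w),f\rangle_{\mathcal{A}}\,d\mu(w)\Big\|=\big\|\langle\{a_{w}\},U^{\ast}f\rangle_{l^{2}(\mathcal{A})}\big\|\leq\|\{a_{w}\}\|\,\|U^{\ast}f\|,$$
and bound $\|U^{\ast}f\|^{2}=\|\langle C^{\frac12}S_{C}C^{\frac12}f,f\rangle_{\mathcal{A}}\|=\|\langle S_{C}C^{\frac12}f,C^{\frac12}f\rangle_{\mathcal{A}}\|\leq B\|C^{\frac12}f\|^{2}\leq B\|C\|\,\|f\|^{2}$. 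Taking the supremum over $\|f\|\leq1$ shows the integral converges and $\|U(\{a_{w}\})\|\leq\sqrt{B}\|C^{\frac12}\|\,\|\{a_{w}\}\|$. Equivalently and more cleanly, $\|U\|^{2}=\|UU^{\ast}\|=\|C^{\frac12}S_{C}C^{\frac12}\|\leq\|C^{\frac12}\|^{2}\|S_{C}\|\leq B\|C\|$, and since $\|C^{\frac12}\|^{2}=\|C\|$ this is precisely the asserted bound $\|U\|\leq\sqrt{B}\|C^{\frac12}\|$.

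For the converse I would run the estimate backwards, and this is where Lemma \ref{111} (and hence the range hypothesis $R(S_{C}^{\frac12})\subset R((CS_{C})^{\frac12})$) enters. That lemma gives $\langle S_{C}f,f\rangle_{\mathcal{A}}=\|S_{C}^{\frac12}f\|^{2}\leq\lambda\|(CS_{C})^{\frac12}f\|^{2}=\lambda\langle CS_{C}f,f\rangle_{\mathcal{A}}$; using the commutation to write $CS_{C}=C^{\frac12}S_{C}C^{\frac12}=UU^{\ast}$, this becomes $\langle S_{C}f,f\rangle_{\mathcal{A}}\leq\lambda\langle UU^{\ast}f,f\rangle_{\mathcal{A}}=\lambda\|U^{\ast}f\|^{2}\leq\lambda\|U\|^{2}\|f\|^{2}$. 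Thus boundedness of $U$ forces $F$ to satisfy the upper $C$-controlled Bessel inequality, and combining with $\|U\|\leq\sqrt{B}\|C^{\frac12}\|$ yields a Bessel bound for $F$.

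The main obstacle, and the step I would treat most carefully, is the justification that the Bochner integral defining $U$ genuinely converges in $\mathcal{H}$ (so that $U$ is well defined, not merely weakly defined) and that $C$ may be pulled through the integral; both rest on the boundedness and $\mathcal{A}$-linearity of $C$. A second delicate point is the bookkeeping of constants in the converse: the identity $UU^{\ast}=C^{\frac12}S_{C}C^{\frac12}$ only controls $S_{C}$ up to the factor $\lambda$ of Lemma \ref{111} and the invertibility of $C$, so recovering a Bessel bound genuinely needs the commutation and range hypotheses, not the norm estimate alone — the commutation is what identifies $UU^{\ast}$ with $C^{\frac12}S_{C}C^{\frac12}$ and makes the forward bound sharp.
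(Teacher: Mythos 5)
Your proof is correct and follows essentially the same route as the paper: both directions hinge on computing $U^{\ast}f=\{\langle Cf,F(w)\rangle_{\mathcal{A}}\}_{w\in\Omega}$, identifying $UU^{\ast}$ with $CS_{C}$ (your form $C^{\frac12}S_{C}C^{\frac12}$ is the same operator under the commutation hypothesis), and invoking Lemma \ref{111} for the converse. Your forward estimate via $\|U\|^{2}=\|UU^{\ast}\|\leq\|C^{\frac12}\|^{2}\|S_{C}\|$ is a slightly cleaner packaging of the paper's Cauchy--Schwarz computation, but it is not a genuinely different argument.
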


\begin{proof}
	Assume that F is a continuous 	C-controlled Bessel with bound B.
	Hence , $$\|\int_{\Omega}\langle f,F(w)\rangle_{\mathcal{A}} \langle CF(w),f\rangle_{\mathcal{A}} d\mu(w)\|\leq B\|f\|^2, f\in {\mathcal{H}} . $$
	So, $$\|\langle S_Cf,f\rangle_{\mathcal{A}} \|\leq B\|f\|^2.$$
	In the begining, we show that U is  well defined . \\
	For each $\{a_w\}_{w{\in \Omega}} \in l^2(\mathcal{A})$,
	\begin{align*}
	\|U(\{a_w\}_{\omega\in {\Omega}})\|^2&= \underset{f\in \mathcal{H}, \|f\|=1} {\sup} {\|\langle U(\{a_w\}_{\omega\in {\Omega}}),f\rangle_{\mathcal{A}}\|^2 }\\&= \underset{f\in \mathcal{H}, \|f\|=1} {\sup}{ \|\langle \int_{\Omega} a_w CF(w)d\mu(w),f\rangle_{\mathcal{A}} \|^2} \\
	&=\underset{f\in \mathcal{H}, \|f\|=1} {\sup} \| \int_{\Omega} a_w \langle CF(w),f\rangle_{\mathcal{A}} d\mu(w)\|^2 \\    
	& \leq \underset{f\in \mathcal{H}, \|f\|=1} {\sup} \| \int_{\Omega}\langle f, C F(w)\rangle_{\mathcal{A}} \langle C F(w),f \rangle_{\mathcal{A}} d\mu(w)\| . 
	\| \int_{\Omega} a_wa_w^{\ast}d\mu(w)\|\\
	&=\underset{f\in \mathcal{H}, \|f\|=1} {\sup}\|\langle \int_{\Omega}\langle f, C F(w)\rangle_{\mathcal{A}}   C F(w)d\mu(w),f \rangle_{\mathcal{A}} \|. 
	\| \int_{\Omega} a_wa_w^{\ast}d\mu(w)\|\\
	&=\underset{f\in \mathcal{H}, \|f\|=1} {\sup} \|\langle CS_Cf,f \rangle_{\mathcal{A}}\|. 
	\| \int_{\Omega} a_wa_w^{\ast}d\mu(w)\|\\
	&=\underset{f\in \mathcal{H}, \|f\|=1} {\sup} \|\langle (CS_C)^{\frac{1}{2}}f,(CS_C)^{\frac{1}{2}}f \rangle_{\mathcal{A}}\|.
	\|\{a_w\}_{\omega\in {\Omega}} \|^2\\
	& \leq \underset{f\in \mathcal{H}, \|f\|=1} {\sup} \|(C)^{\frac{1}{2}}\|^2  \|(S_Cf)^{\frac{1}{2}} \|^2  \|\{a_w\}_{\omega\in {\Omega}} \|^2\\ 
	&\leq B \|(C)^{\frac{1}{2}}\|^2 \|\{a_w\}_{\omega\in {\Omega}} \|^2 . 
	\end{align*}
	
	Then, $$\| U \| \leq \sqrt{B} \|(C)^{\frac{1}{2}}\|. $$
	Hence $U$ is well defined and bounded.\\
	Now, suppose that $U$ is well defined, and  $$\| U \| \leq \sqrt{B} \|(C)^{\frac{1}{2}}\|. $$
	For any $f\in \mathcal{H}$ and $\{a_w\}_{\omega\in {\Omega}}\in l^2(\mathcal{A})$, we have, 
	\begin{align*}
	\langle f, U(\{a_w\}_{\omega\in {\Omega}})\rangle_{\mathcal{A}}&= \langle f,\int_{\Omega} a_w CF(w)d\mu(w)\rangle_{\mathcal{A}} \\ &=\int_\Omega \langle a_w^{\ast} C f, F(w)\rangle_{\mathcal{A}} d\mu(w) \\
	&=\int_\Omega \langle  C f, F(w)\rangle_{\mathcal{A}} a_w^{\ast} d\mu(w) \\
	&= \langle \{\langle Cf, F(w)\rangle_{\mathcal{A}} \}_{\omega\in {\Omega}} ,\{a_w\}_{\omega\in {\Omega}} \rangle_{\mathcal{A}}. 
	\end{align*}
	Then, $U$ has an adjoint, and $$U^{\ast}f=\{\langle Cf, F(w)\rangle_{\mathcal{A}} \}_{\omega\in {\Omega}}.$$
	Also,
\begin{align*}
\| U \|^{2}&= \underset{\|(\{a_w\}_{\omega\in {\Omega}})\|=1} {\sup} {\|U(\{a_w\}_{\omega\in {\Omega}})\|^2 }\\
&=\underset{\|(\{a_w\}_{\omega\in {\Omega}})\|=1, \|f\|=1} {\sup} {\|\langle U(\{a_w\}_{\omega\in {\Omega}}),f\rangle_{\mathcal{A}} \|^2 }\\
&=\underset{\|(\{a_w\}_{\omega\in {\Omega}})\|=1, \|f\|=1} {\sup} {\|\langle \{a_w\}_{\omega\in {\Omega}},U^{\ast}f\rangle_{\mathcal{A}} \|^2 }\\
&=\underset{\|f\|=1} {\sup} {\|U^{\ast}f \|^2 }\\
&=\|U^{\ast}\|^{2}
\end{align*} 
So,
	\begin{equation*}
	\|U^{\ast}f \|^2= \|\langle U^{\ast}f, U^{\ast}f \rangle_{\mathcal{A}}\| =\|\langle UU^{\ast}f, f \rangle_{\mathcal{A}}\|= \|\langle CS_Cf, f \rangle_{\mathcal{A}}\|.
	\end{equation*}
	Then,
	\begin{equation}\label{101}
	\|U^{\ast}f \|^2=\|(CS_C)^{\frac{1}{2}}f\|^2 \leq B\|(C)^{\frac{1}{2}}\|^2\|f \|^2. 
	\end{equation}
	From lemma \ref{111}, we have, $$\|(S_C)^{\frac{1}{2}}f\|^2 \leq \lambda\|(CS_C)^{\frac{1}{2}}f\|^2 ,$$
	for some $\lambda > 0$.\\
	Using \eqref{101} we get, 
	\begin{align*}
	\|(S_C)^{\frac{1}{2}}f\|^2 &\leq \lambda\|(CS_C)^{\frac{1}{2}}f\|^2 \\&\leq \lambda B\|C^{\frac{1}{2}}\|^2\|f\|^2.
	\end{align*}
	Hence F is a continuous C-controlled Bessel mapping with Bessel bound $\lambda B\|C^{\frac{1}{2}}\|^2$.
\end{proof}
	
\begin{proposition}\label{100}
	Let F be a continuous C-controlled K-frame for $\mathcal{H}$ with bounds A and B. Then :$$ACKK^{\ast} I\leq S_C \leq B.I.$$
\end{proposition}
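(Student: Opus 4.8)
The plan is to split the defining two-sided estimate of a continuous $C$-controlled $K$-frame into its two halves and translate each into an operator inequality, using the identity $\langle S_C f, f\rangle_{\mathcal{A}} = \int_{\Omega}\langle f,F(w)\rangle_{\mathcal{A}} \langle CF(w),f\rangle_{\mathcal{A}}\, d\mu(w)$ recorded just before Theorem~\ref{555}. With this, the frame condition \eqref{7} reads
\[
A\langle C^{\frac{1}{2}} K^{\ast}f, C^{\frac{1}{2}}K^{\ast}f\rangle_{\mathcal{A}} \leq \langle S_C f, f\rangle_{\mathcal{A}} \leq B\langle f,f\rangle_{\mathcal{A}}, \qquad f \in \mathcal{H}.
\]

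For the upper bound I would read the right inequality as $\langle S_C f, f\rangle_{\mathcal{A}} \leq \langle B\,I f, f\rangle_{\mathcal{A}}$ for all $f$; since $S_C$ is selfadjoint (as noted in the Remark above) and $B\,I$ is selfadjoint, this is exactly the operator inequality $S_C \leq B\,I$.

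For the lower bound I would rewrite the controlled term using the selfadjointness of $C^{\frac{1}{2}}$, so that $(C^{\frac{1}{2}})^{\ast}C^{\frac{1}{2}} = C$, together with the adjoint property of $K$:
\[
\langle C^{\frac{1}{2}} K^{\ast}f, C^{\frac{1}{2}}K^{\ast}f\rangle_{\mathcal{A}} = \langle C K^{\ast}f, K^{\ast}f\rangle_{\mathcal{A}} = \langle K C K^{\ast}f, f\rangle_{\mathcal{A}}.
\]
Invoking the commutativity $CK = KC$ one has $KCK^{\ast} = CKK^{\ast}$, so the left inequality becomes $A\langle CKK^{\ast}f, f\rangle_{\mathcal{A}} \leq \langle S_C f, f\rangle_{\mathcal{A}}$, that is $ACKK^{\ast} \leq S_C$. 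Combining the two halves gives $ACKK^{\ast} \leq S_C \leq B\,I$.

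The step I expect to be the crux is the identity $KCK^{\ast} = CKK^{\ast}$, which requires $C$ and $K$ to commute; this commutativity is also what makes $CKK^{\ast}$, the product of the positive operators $C$ and $KK^{\ast}$, selfadjoint, so that the asserted inequality $ACKK^{\ast} \leq S_C$ is even meaningful as an inequality between selfadjoint operators. I would therefore state the hypothesis $CK = KC$ explicitly, consistent with the assumptions already imposed in Theorem~\ref{555}.
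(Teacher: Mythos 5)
Your proof is correct and follows essentially the same route as the paper's: both unfold $A\langle C^{\frac{1}{2}}K^{\ast}f, C^{\frac{1}{2}}K^{\ast}f\rangle_{\mathcal{A}}$ into $A\langle CKK^{\ast}f, f\rangle_{\mathcal{A}}$ and then read the two operator inequalities off the quadratic-form inequalities, using the selfadjointness of $S_C$. Your explicit remark that the step $KCK^{\ast} = CKK^{\ast}$ requires the hypothesis $CK = KC$ (present in Theorem~\ref{555} but omitted from this proposition's statement) is a genuine improvement over the paper, which performs that substitution silently even though without commutativity $CKK^{\ast}$ need not be selfadjoint and the asserted inequality would not be well posed.
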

\begin{proof}
	Suppose F is a continuous C-controlled K-frame with bounds A and B. Then, 
	$$ A \langle C^{\frac{1}{2}}K^{\ast}f,C^{\frac{1}{2}}K^{\ast}f\rangle_{\mathcal{A}} \leq \int_{\Omega}\langle f, F(w) \rangle_{\mathcal{A}} \langle C F(w), f \rangle_{\mathcal{A}} d\mu(w)\leq B\langle f , f \rangle_{\mathcal{A}} .$$
	Hence, $$ A \langle C KK^{\ast}f,f\rangle_{\mathcal{A}} \leq \langle S_Cf , f \rangle_{\mathcal{A}}\leq B\langle f , f \rangle_{\mathcal{A}}.   $$
	So, $$ACKK^{\ast}I \leq S_C \leq B.I.$$
\end{proof}
\begin{proposition}\label{8}
	Let F be a continuous C-controlled Bessel mapping for $\mathcal{H}$, and $C\in GL^{+}(\mathcal{H})  $. Then F is a continuous C-controlled K-frame for $\mathcal{H}$ if and only if there exists $A> 0$ such that: $$ACKK^{\ast} \leq CS. $$
\end{proposition}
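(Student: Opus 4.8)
The plan is to recognize this proposition as a reformulation of the lower frame bound as an operator inequality, the upper bound being granted for free by the continuous $C$-controlled Bessel hypothesis. Indeed, since F is assumed to be a continuous $C$-controlled Bessel mapping, the right-hand inequality of \eqref{7}, namely $\int_{\Omega}\langle f,F(w)\rangle_{\mathcal{A}}\langle CF(w),f\rangle_{\mathcal{A}}\,d\mu(w)\leq B\langle f,f\rangle_{\mathcal{A}}$, already holds for some $B>0$. Consequently F is a continuous $C$-controlled K-frame exactly when there is an $A>0$ with $A\langle C^{\frac{1}{2}}K^{\ast}f,C^{\frac{1}{2}}K^{\ast}f\rangle_{\mathcal{A}}\leq\int_{\Omega}\langle f,F(w)\rangle_{\mathcal{A}}\langle CF(w),f\rangle_{\mathcal{A}}\,d\mu(w)$ for all $f\in\mathcal{H}$, and the entire argument consists of turning this remaining inequality into the stated operator order.

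First I would replace the middle term by the frame operator, using $S_C=CS$ established in the Remark, so that $\int_{\Omega}\langle f,F(w)\rangle_{\mathcal{A}}\langle CF(w),f\rangle_{\mathcal{A}}\,d\mu(w)=\langle S_Cf,f\rangle_{\mathcal{A}}=\langle CSf,f\rangle_{\mathcal{A}}$. Next I would rewrite the left-hand term: because $C\in GL^{+}(\mathcal{H})$, the square root $C^{\frac{1}{2}}$ is positive and selfadjoint, hence $\langle C^{\frac{1}{2}}K^{\ast}f,C^{\frac{1}{2}}K^{\ast}f\rangle_{\mathcal{A}}=\langle CK^{\ast}f,K^{\ast}f\rangle_{\mathcal{A}}=\langle KCK^{\ast}f,f\rangle_{\mathcal{A}}=\langle CKK^{\ast}f,f\rangle_{\mathcal{A}}$, where the last equality uses $CK=KC$. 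The lower frame inequality then reads $\langle ACKK^{\ast}f,f\rangle_{\mathcal{A}}\leq\langle CSf,f\rangle_{\mathcal{A}}$ for every $f\in\mathcal{H}$.

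Finally I would appeal to the standard equivalence, for selfadjoint adjointable operators, between pointwise comparison of the induced $\mathcal{A}$-valued quadratic forms and the operator order itself: $\langle ACKK^{\ast}f,f\rangle_{\mathcal{A}}\leq\langle CSf,f\rangle_{\mathcal{A}}$ for all $f$ is precisely $ACKK^{\ast}\leq CS$. Reading the chain of equalities and equivalences from left to right yields the forward implication (which in fact recovers the lower estimate of Proposition \ref{100}), while reading it from right to left and combining with the Bessel upper bound yields the converse. The step I expect to require the most care is the algebraic identity $\langle C^{\frac{1}{2}}K^{\ast}f,C^{\frac{1}{2}}K^{\ast}f\rangle_{\mathcal{A}}=\langle CKK^{\ast}f,f\rangle_{\mathcal{A}}$: transferring $C^{\frac{1}{2}}$ across the inner product relies on its selfadjointness, and collapsing $KCK^{\ast}$ to $CKK^{\ast}$ relies on the commutation $CK=KC$ (as in the hypotheses of Theorem \ref{555}); without that relation one only obtains $AKCK^{\ast}\leq CS$, so I would keep track of this assumption throughout.
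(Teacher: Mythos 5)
Your proposal is correct and follows essentially the same route as the paper: the Bessel hypothesis supplies the upper bound, the middle term is identified with $\langle S_Cf,f\rangle_{\mathcal{A}}=\langle CSf,f\rangle_{\mathcal{A}}$, and the lower frame inequality is translated into the operator order via the identity $\langle C^{\frac{1}{2}}K^{\ast}f,C^{\frac{1}{2}}K^{\ast}f\rangle_{\mathcal{A}}=\langle CKK^{\ast}f,f\rangle_{\mathcal{A}}$. You are right to flag that this last identity silently requires $CK=KC$ (one only gets $\langle KCK^{\ast}f,f\rangle_{\mathcal{A}}$ without it); the paper uses the same identity without stating that hypothesis, so your bookkeeping is actually more careful than the original.
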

\begin{proof}
	$(\Longrightarrow) $ obvious.\\
	$(\Longleftarrow)$ Assume that there exists $A> 0$ such that: $ACKK^{\ast} \leq CS $,\\
	then, $$ A \langle C KK^{\ast}f,f\rangle_{\mathcal{A}} \leq \langle S_Cf , f \rangle_{\mathcal{A}}.$$
	Hence,
	$$ A \langle C^{\frac{1}{2}}K^{\ast}f,C^{\frac{1}{2}}K^{\ast}f\rangle_{\mathcal{A}} \leq \langle S_Cf , f \rangle_{\mathcal{A}}.$$
	Therefore, 	
	$$ A \langle C^{\frac{1}{2}}K^{\ast}f,C^{\frac{1}{2}}K^{\ast}f\rangle_{\mathcal{A}} \leq 
	\int_{\Omega}\langle f, F(w) \rangle_{\mathcal{A}} \langle C F(w), f \rangle_{\mathcal{A}} d\mu(w).$$
	Hence F is a continuous C-controlled K-frame.\\
\end{proof}
\begin{proposition}\label{9}
    Let $ C \in GL^{+}(\mathcal{H})$, $ K \in End^{\ast}_{\mathcal{A}}(\mathcal{H})$ and F be a continuous C-controlled K-frame for $\mathcal{H}$ with lower and upper frames bounds A and B respectivelty. Suppose $KC=CK$ and  $R(C^{\frac{1}{2}}) \subset R(K^{\ast}C^{\frac{1}{2}}) $with 	$R(K^{\ast}C^{\frac{1}{2}}) $ is closed. Then  F is  continuous  K-frame for $\mathcal{H}$ with lower and upper frames bounds $A\| C^{\frac{-1}{2}}\|^{-2}\|(C)^{\frac{1}{2}}\|^{-2}$ and $B\|C^\frac{-1}{2}\|^{2}$ respectively.

\end{proposition}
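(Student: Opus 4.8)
The plan is to derive the two continuous $K$-frame inequalities of Definition \ref{17}, namely $A'\langle K^{\ast}f,K^{\ast}f\rangle_{\mathcal{A}}\leq \langle Sf,f\rangle_{\mathcal{A}}\leq B'\langle f,f\rangle_{\mathcal{A}}$, directly from the $C$-controlled inequalities \eqref{7}. Writing the middle integral of \eqref{7} as $\langle S_Cf,f\rangle_{\mathcal{A}}$ with $S_C=CS$, the hypotheses give $A\langle C^{\frac{1}{2}}K^{\ast}f,C^{\frac{1}{2}}K^{\ast}f\rangle_{\mathcal{A}}\leq\langle S_Cf,f\rangle_{\mathcal{A}}\leq B\langle f,f\rangle_{\mathcal{A}}$. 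Two ingredients drive everything. First, since $C\in GL^{+}(\mathcal{H})$ its spectrum lies in $[\|C^{\frac{-1}{2}}\|^{-2},\|C^{\frac{1}{2}}\|^{2}]$, so as operators
\[
\|C^{\frac{-1}{2}}\|^{-2}\,I\leq C\leq \|C^{\frac{1}{2}}\|^{2}\,I .
\]
Second, by the Remark preceding Theorem \ref{555} the operator $S_C=CS$ is self-adjoint (here this is legitimate because $F$ is assumed to be a controlled $K$-frame, so $\langle S_Cf,f\rangle_{\mathcal{A}}\geq A\langle C^{\frac{1}{2}}K^{\ast}f,C^{\frac{1}{2}}K^{\ast}f\rangle_{\mathcal{A}}\geq0$); hence $CS=(CS)^{\ast}=SC$, and by the continuous functional calculus $C^{\frac{1}{2}}$, together with the positive operators $\|C^{\frac{1}{2}}\|^{2}I-C$ and $C-\|C^{\frac{-1}{2}}\|^{-2}I$, all commute with $S$.

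For the upper bound I would multiply $\|C^{\frac{-1}{2}}\|^{-2}I\leq C$ by $S$: since $C-\|C^{\frac{-1}{2}}\|^{-2}I\geq0$ commutes with $S\geq0$, their product is positive in the $C^{\ast}$-algebra $End_{\mathcal{A}}^{\ast}(\mathcal{H})$, so $CS\geq\|C^{\frac{-1}{2}}\|^{-2}S$. Pairing with $f$ and rescaling gives $\langle Sf,f\rangle_{\mathcal{A}}\leq\|C^{\frac{-1}{2}}\|^{2}\langle S_Cf,f\rangle_{\mathcal{A}}$, and combining with the controlled Bessel bound $\langle S_Cf,f\rangle_{\mathcal{A}}\leq B\langle f,f\rangle_{\mathcal{A}}$ yields $\langle Sf,f\rangle_{\mathcal{A}}\leq B\|C^{\frac{-1}{2}}\|^{2}\langle f,f\rangle_{\mathcal{A}}$, the asserted upper bound.

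For the lower bound I would estimate both ends of the controlled lower inequality. On the left, $\langle C^{\frac{1}{2}}K^{\ast}f,C^{\frac{1}{2}}K^{\ast}f\rangle_{\mathcal{A}}=\langle CK^{\ast}f,K^{\ast}f\rangle_{\mathcal{A}}\geq\|C^{\frac{-1}{2}}\|^{-2}\langle K^{\ast}f,K^{\ast}f\rangle_{\mathcal{A}}$, applying $C\geq\|C^{\frac{-1}{2}}\|^{-2}I$ to $g=K^{\ast}f$. On the right, the commuting positive operators $\|C^{\frac{1}{2}}\|^{2}I-C$ and $S$ give $CS\leq\|C^{\frac{1}{2}}\|^{2}S$, hence $\langle S_Cf,f\rangle_{\mathcal{A}}\leq\|C^{\frac{1}{2}}\|^{2}\langle Sf,f\rangle_{\mathcal{A}}$. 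Feeding both into $A\langle C^{\frac{1}{2}}K^{\ast}f,C^{\frac{1}{2}}K^{\ast}f\rangle_{\mathcal{A}}\leq\langle S_Cf,f\rangle_{\mathcal{A}}$ produces
\[
A\|C^{\frac{-1}{2}}\|^{-2}\langle K^{\ast}f,K^{\ast}f\rangle_{\mathcal{A}}\leq\|C^{\frac{1}{2}}\|^{2}\langle Sf,f\rangle_{\mathcal{A}},
\]
that is, $\langle Sf,f\rangle_{\mathcal{A}}\geq A\|C^{\frac{-1}{2}}\|^{-2}\|C^{\frac{1}{2}}\|^{-2}\langle K^{\ast}f,K^{\ast}f\rangle_{\mathcal{A}}$. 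Since the weak measurability of $F$ is inherited unchanged, this exhibits $F$ as a continuous $K$-frame with exactly the stated bounds.

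The main obstacle is not any single estimate but the repeated use of the principle that the product of two commuting positive operators is positive, and the transfer of such operator inequalities to the $\mathcal{A}$-valued pairings $\langle \cdot f,f\rangle_{\mathcal{A}}$. This is precisely where commutativity is indispensable: without $CS=SC$ the product $CS$ need not even be self-adjoint and the comparison between $\langle Sf,f\rangle_{\mathcal{A}}$ and $\langle S_Cf,f\rangle_{\mathcal{A}}$ collapses. I would therefore isolate as a preliminary step the positivity and self-adjointness of $S_C$ and record $C^{\frac{1}{2}}S=SC^{\frac{1}{2}}$ before any estimate. I note that the remaining hypotheses $KC=CK$ and $R(C^{\frac{1}{2}})\subset R(K^{\ast}C^{\frac{1}{2}})$ with closed range are not needed for this direct route; they would only enter if one instead compared the ranges of $S^{\frac{1}{2}}$ and $(CS)^{\frac{1}{2}}$ through Theorem \ref{3}, as in Lemma \ref{111}.
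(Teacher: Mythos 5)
Your proof is correct, and it takes a genuinely different (and cleaner) route than the paper. The paper first invokes Theorem \ref{555} to pass to the norm form $A\|C^{\frac{1}{2}}K^{\ast}f\|^{2}\leq\|\langle S_Cf,f\rangle_{\mathcal{A}}\|\leq B\|f\|^{2}$ --- which is why it carries the hypotheses $KC=CK$ and $R(C^{\frac{1}{2}})\subset R(K^{\ast}C^{\frac{1}{2}})$ with closed range --- and then inserts $C^{\frac{-1}{2}}C^{\frac{1}{2}}$ (resp.\ $C^{-1}C$) and estimates operator norms of $C^{\pm\frac{1}{2}}$, moving somewhat loosely between scalar norms and $\mathcal{A}$-valued quantities before reading off the K-frame condition. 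You instead stay entirely at the level of the $\mathcal{A}$-valued order: from positivity of $S_C$ (guaranteed by the lower controlled bound) you get $CS=SC$, and then the spectral bounds $\|C^{\frac{-1}{2}}\|^{-2}I\leq C\leq\|C^{\frac{1}{2}}\|^{2}I$ together with positivity of products of commuting positive operators give both estimates directly; the constants come out identical. This buys you two things: the conclusion is obtained in the stronger $\mathcal{A}$-valued form that Definition \ref{17} actually requires (the paper's argument only really yields the equivalent norm inequalities), and, as you correctly observe, the hypotheses $KC=CK$ and the range-inclusion/closedness condition become superfluous on your route. One caveat you share with the paper: both arguments presuppose that the uncontrolled frame operator $S$ is a well-defined positive adjointable operator (i.e.\ that $F$ is already an uncontrolled Bessel mapping), which is implicitly built into the identity $S_C=CS$ rather than derived from the controlled hypothesis.
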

\begin{proof}
    Assume that F is a continuous C-controlled K-frame with lower and upper frames bounds A and B. From theorem \ref{555}, we have:
    $$A\| C^{\frac{1}{2}} K^{\ast}f\|^{2} \leq\|\int_{\Omega}\langle f, F(w) \rangle_{\mathcal{A}} \langle C F(w), f \rangle_{\mathcal{A}} d\mu(w)\|\leq B\|f\|^2  , f\in \mathcal{H}. $$
    Then, 
    \begin{align*}
     A\|  K^{\ast}f\|^{2}&=A\| C^{\frac{-1}{2}} C^{\frac{1}{2}} K^{\ast}f\|^{2}\\
     &\leq A\| C^{\frac{-1}{2}}\|^2 \|C^{\frac{1}{2}} K^{\ast}f\|^{2}\\
     &\leq \| C^{\frac{-1}{2}}\|^2 \|\int_{\Omega}\langle f, F(w) \rangle_{\mathcal{A}} \langle C F(w), f \rangle_{\mathcal{A}} d\mu(w)\|.
    \end{align*}
    So,
    \begin{equation}\label{33}
     A\|  K^{\ast}f\|^{2}\leq\| C^{\frac{1}{2}}\|^2\|\langle S_Cf, f \rangle_{\mathcal{A}}\|.
    \end{equation}
    Moreover, 
    \begin{align*}
    \langle S_Cf, f \rangle_{\mathcal{A}}&=\langle CSf, f \rangle_{\mathcal{A}}\\
    &=\langle (CS)^{\frac{1}{2}}f, (CS)^{\frac{1}{2}}f \rangle_{\mathcal{A}}\\
    & =\|(CS)^{\frac{1}{2}}f\|^2\\
    &\leq \|(C)^{\frac{1}{2}}\|^2.\|(S)^{\frac{1}{2}}f\|^2\\
    &= \|(C)^{\frac{1}{2}}\|^2.\langle (S)^{\frac{1}{2}}f, (S)^{\frac{1}{2}}f \rangle_{\mathcal{A}}\\
    &= \|(C)^{\frac{1}{2}}\|^2.\langle Sf, f \rangle_{\mathcal{A}},
    \end{align*}
    then, 
    \begin{equation}\label{34}
  \langle S_Cf, f \rangle_{\mathcal{A}}\leq \|(C)^{\frac{1}{2}}\|^2.\langle Sf, f \rangle_{\mathcal{A}}	.
    \end{equation}
   From \eqref{33}and \eqref{34}, we have,
   \begin{align*}
    A\|K^{\ast}f\|^{2}&\leq\| C^{\frac{-1}{2}}\|^2\|(C)^{\frac{1}{2}}\|^2\langle Sf, f \rangle_{\mathcal{A}}\\&=\| C^{\frac{-1}{2}}\|^2\|(C)^{\frac{1}{2}}\|^2\int_{\Omega}\langle f, F(w) \rangle_{\mathcal{A}} \langle F(w), f \rangle_{\mathcal{A}} d\mu(w).
   \end{align*}
   Hence, 
   $$\| C^{\frac{-1}{2}}\|^{-2}\|(C)^{\frac{1}{2}}\|^{-2} A\|K^{\ast}f\|^{2}\leq\int_{\Omega}\langle f, F(w) \rangle_{\mathcal{A}} \langle F(w), f \rangle_{\mathcal{A}} d\mu(w). $$
   Moreover,
   \begin{align*}
    \|\int_{\Omega}\langle f, F(w) \rangle_{\mathcal{A}} \langle  F(w), f \rangle_{\mathcal{A}} d\mu(w)\|&=\| \langle Sf,f\rangle_{\mathcal{A}}\| \\
    &=\| \langle C^{-1}CSf,f\rangle_{\mathcal{A}}\|\\
    &=\| \langle (C^{-1}CS)^{\frac{-1}{2}}f,(C^{-1}CS)^{\frac{-1}{2}}f\rangle_{\mathcal{A}}\|\\
    &=\| (C^{-1}CS)^{\frac{1}{2}}f \|^2\\
    &\leq \| C^{\frac{-1}{2}} \|^2\|(CS)^{\frac{1}{2}}f \|^2\\ 
    &=\| C^{\frac{-1}{2}}\|^2\langle (CS)^{\frac{1}{2}}f,(CS)^{\frac{1}{2}}f\rangle_{\mathcal{A}}  \\
    &=\| C^{\frac{-1}{2}}\|^2 \langle CSf,f\rangle_{\mathcal{A}}\\
    &\leq \| C^{\frac{-1}{2}}\|^2 B \| f\|^2 .
   \end{align*}\\
   Then F is a continuous  K-frame for $\mathcal{H}$ with lower and upper frames bounds $A\| C^{\frac{-1}{2}}\|^{-2}\|(C)^{\frac{1}{2}}\|^{-2}$ and $B\|C^\frac{-1}{2}\|^{2}$.\\
\end{proof}
\begin{proposition}\label{10}
	Let $ C \in GL^{+}(\mathcal{H})$ and $ K \in End_{\mathcal{A}}^{\ast}(\mathcal{H})$. We Suppose that $KC=CK$, $R(C^{\frac{1}{2}}) \subset R(K^{\ast}C^{\frac{1}{2}}) $ with 	$R(K^{\ast}C^{\frac{1}{2}}) $ is closed and F is a continuous  K-frame for $\mathcal{H}$ with lower and upper frames bounds A and B respectivlty.\\
	Then  F is  continuous C-controlled K-frame for $\mathcal{H}$ with lower and upper frames bounds $A$ and   $\|C\|\|S\|$.
\end{proposition}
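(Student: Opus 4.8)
The plan is to read both defining inequalities at the level of the frame operators and to transport the $K$-frame estimates to the $C$-controlled ones via the substitution $f\mapsto C^{\frac{1}{2}}f$. The starting point is that, since $F$ is a continuous $K$-frame, for every $g\in\mathcal{H}$ one has
$$A\langle K^{\ast}g,K^{\ast}g\rangle_{\mathcal{A}}\leq\langle Sg,g\rangle_{\mathcal{A}}\leq B\langle g,g\rangle_{\mathcal{A}},$$
whereas the quantity to be estimated is $\int_{\Omega}\langle f,F(w)\rangle_{\mathcal{A}}\langle CF(w),f\rangle_{\mathcal{A}}d\mu(w)=\langle S_Cf,f\rangle_{\mathcal{A}}=\langle CSf,f\rangle_{\mathcal{A}}$, using the identity $S_C=CS$ from the Remark.

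For the upper bound I would argue purely from operator positivity, and note that it does not require any commutation. Since $S_C=CS$ is positive, selfadjoint and bounded, I write $\langle S_Cf,f\rangle_{\mathcal{A}}=\langle S_C^{\frac{1}{2}}f,S_C^{\frac{1}{2}}f\rangle_{\mathcal{A}}$ and apply Lemma \ref{2} to the operator $S_C^{\frac{1}{2}}$, obtaining $\langle S_Cf,f\rangle_{\mathcal{A}}\leq\|S_C^{\frac{1}{2}}\|^{2}\langle f,f\rangle_{\mathcal{A}}=\|S_C\|\langle f,f\rangle_{\mathcal{A}}$; submultiplicativity of the norm gives $\|S_C\|=\|CS\|\leq\|C\|\|S\|$, which is exactly the asserted upper bound.

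For the lower bound I would substitute $g=C^{\frac{1}{2}}f$ into the $K$-frame lower estimate. Because $CK=KC$ and $C$ is selfadjoint, $C$ and hence $C^{\frac{1}{2}}$ (by functional calculus) commute with $K^{\ast}$, so $K^{\ast}C^{\frac{1}{2}}f=C^{\frac{1}{2}}K^{\ast}f$ and the left-hand side becomes precisely $A\langle C^{\frac{1}{2}}K^{\ast}f,C^{\frac{1}{2}}K^{\ast}f\rangle_{\mathcal{A}}$. The right-hand side becomes $\langle SC^{\frac{1}{2}}f,C^{\frac{1}{2}}f\rangle_{\mathcal{A}}=\langle C^{\frac{1}{2}}SC^{\frac{1}{2}}f,f\rangle_{\mathcal{A}}$, so the entire argument reduces to identifying this expression with $\langle CSf,f\rangle_{\mathcal{A}}=\langle S_Cf,f\rangle_{\mathcal{A}}$.

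The main obstacle is exactly this last identification, which amounts to $C^{\frac{1}{2}}SC^{\frac{1}{2}}=CS$, i.e. to the commutation of $C$ with the $K$-frame operator $S$ — a relation that is not among the stated hypotheses, since only $CK=KC$ and the range condition on $K^{\ast}C^{\frac{1}{2}}$ are assumed. I would resolve it by first observing that $C$ commutes with $KK^{\ast}$ (immediate from $CK=KC$ and $CK^{\ast}=K^{\ast}C$), and then, once $CS=SC$ is available, concluding that the two commuting positive operators $C$ and $S-AKK^{\ast}\geq0$ have a positive product $C(S-AKK^{\ast})=C^{\frac{1}{2}}(S-AKK^{\ast})C^{\frac{1}{2}}\geq0$; this yields $ACKK^{\ast}\leq CS=S_C$, and since $\langle CKK^{\ast}f,f\rangle_{\mathcal{A}}=\langle C^{\frac{1}{2}}K^{\ast}f,C^{\frac{1}{2}}K^{\ast}f\rangle_{\mathcal{A}}$, the desired lower bound with constant $A$ follows. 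The range hypothesis $R(C^{\frac{1}{2}})\subset R(K^{\ast}C^{\frac{1}{2}})$ seems inessential to this direction, and I would make the step rigorous either by adding the explicit hypothesis $CS=SC$ or by carrying the term $\langle C^{\frac{1}{2}}SC^{\frac{1}{2}}f,f\rangle_{\mathcal{A}}$ throughout in place of $\langle S_Cf,f\rangle_{\mathcal{A}}$.
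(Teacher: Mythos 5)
Your argument follows essentially the same route as the paper's: the lower bound is obtained by feeding $C^{\frac{1}{2}}f$ into the $K$-frame inequality and using that $CK=KC$ forces $C^{\frac{1}{2}}$ to commute with $K^{\ast}$, and the upper bound differs only cosmetically (you use positivity of $S_C$ and $\|S_C\|=\|CS\|\leq\|C\|\|S\|$; the paper uses Cauchy--Schwarz in the form $\|\langle Sf,Cf\rangle_{\mathcal{A}}\|\leq\|Sf\|\,\|Cf\|$ and then converts the resulting norm inequality back to an order inequality via Theorem \ref{555} --- which is the only reason the range hypotheses $R(C^{\frac{1}{2}})\subset R(K^{\ast}C^{\frac{1}{2}})$ appear in the statement, so your remark that they are inessential to a direct operator-order argument is accurate). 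The one quibble on your side is that Lemma \ref{2} as stated only produces \emph{some} constant $k$ for $S_C^{\frac{1}{2}}$; to get the specific value $\|S_C\|$ you need the sharper standard fact $\langle Tx,Tx\rangle_{\mathcal{A}}\leq\|T\|^{2}\langle x,x\rangle_{\mathcal{A}}$ for adjointable $T$.

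The obstacle you isolate is genuine, and the paper does not resolve it either. In the paper's displayed chain, the integral $\int_{\Omega}\langle C^{\frac{1}{2}}f,F(w)\rangle_{\mathcal{A}}F(w)\,d\mu(w)$ equals $SC^{\frac{1}{2}}f$ by the definition of $S$, yet it is silently rewritten as $C^{\frac{1}{2}}Sf$; equivalently, the identity $C^{\frac{1}{2}}SC^{\frac{1}{2}}=CS$ is used without justification. Commutation of $C$ with $K$ gives no control over $S$, which is built from $F$ alone, so this step can fail in general. Your two proposed repairs are the right ones: either add $CS=SC$ as a hypothesis, in which case $C(S-AKK^{\ast})=C^{\frac{1}{2}}(S-AKK^{\ast})C^{\frac{1}{2}}\geq 0$ gives $A\langle C^{\frac{1}{2}}K^{\ast}f,C^{\frac{1}{2}}K^{\ast}f\rangle_{\mathcal{A}}=A\langle CKK^{\ast}f,f\rangle_{\mathcal{A}}\leq\langle S_Cf,f\rangle_{\mathcal{A}}$ exactly as claimed, or else restate the lower bound in terms of $\langle C^{\frac{1}{2}}SC^{\frac{1}{2}}f,f\rangle_{\mathcal{A}}$, which is what the substitution actually produces. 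In short, your proof is as complete as the paper's and is more explicit about where the missing hypothesis lives.
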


\begin{proof}
	Assume that F is a continuous  K-frame for $\mathcal{H}$ with lower and upper frames bounds A and B. Then we have:
	$$A\langle   K^{\ast}f, K^{\ast}f\rangle_{\mathcal{A}}\leq \int_{\Omega}\langle f, F(w) \rangle_{\mathcal{A}} \langle  F(w), f \rangle_{\mathcal{A}} d\mu(w)\leq B \langle f, f \rangle_{\mathcal{A}}, $$
	Since $\langle   K^{\ast}f, K^{\ast}f\rangle_{\mathcal{A}}>0$ and $\langle f, f \rangle_{\mathcal{A}}>0$ then, 
\begin{equation}
	A \| K^{\ast}f\|^2 \leq \|\int_{\Omega}\langle f, F(w) \rangle_{\mathcal{A}} \langle  F(w), f \rangle_{\mathcal{A}} d\mu(w)\| \leq B \|f\|^2.
\end{equation}
	Then for every $f\in \mathcal{H}$,
	\begin{align*}
	A\| C^{\frac{1}{2}} K^{\ast}f\|^{2}&=A\|K^{\ast}C^{\frac{1}{2}}f\|^{2}\\
	&\leq \|\int_{\Omega}\langle C^{\frac{1}{2}}f, F(w) \rangle_{\mathcal{A}} \langle F(w),C^{\frac{1}{2}} f \rangle_{\mathcal{A}} d\mu(w)\|\\
	&= \| \langle\int_{\Omega}  \langle C^{\frac{1}{2}}f, F(w) \rangle_{\mathcal{A}}  F(w)d\mu(w),C^{\frac{1}{2}} f \rangle_{\mathcal{A}} \|\\ &= \|\langle C^{\frac{1}{2}}Sf,C^{\frac{1}{2}}f\rangle_{\mathcal{A}} \|\\ 
	&=\|\langle CSf,f\rangle_{\mathcal{A}} \|\\
	&=\|\langle Sf,Cf\rangle_{\mathcal{A}} \|\\ &\leq  \|Sf\|. \|Cf\|,
	\end{align*}
	
	then
	\begin{equation}\label{201}
	A\| C^{\frac{1}{2}} K^{\ast}f\|^{2}	\leq\|\langle S_Cf,f\rangle_{\mathcal{A}} \| \leq\|S\|.\|C\|\|f\|^2.
	\end{equation}
	By \eqref{201}  and theorem\ref{555}, we conclude that F is  continuous C-controlled K-frame for $\mathcal{H}$ with lower and upper frames bounds $A$ and   $\|C\|\|S\|$.\\

\end{proof}
\begin{theorem}\label{11}
	Let $ C \in GL^{+}(\mathcal{H})$, and F be a  continuous C-controlled K-frame for $\mathcal{H}$ with bounds A and B. Let $ M,K \in End_{\mathcal{A}}^{\ast}(\mathcal{H})$ such that $R(M)\subset R(K)$, $R(K)$ is closed and C commutes with $M^{\ast}$ and $K^{\ast}$. Then F is  continuous C-controlled M-frame for $\mathcal{H}$.

\end{theorem}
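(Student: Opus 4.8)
The plan is to keep the middle term $\int_{\Omega}\langle f,F(w)\rangle_{\mathcal{A}}\langle CF(w),f\rangle_{\mathcal{A}}\,d\mu(w)$ fixed throughout, so that the upper bound $B\langle f,f\rangle_{\mathcal{A}}$ carries over verbatim and only a new lower bound involving $M^{\ast}$ must be produced. Since $F$ is a continuous $C$-controlled $K$-frame, I already have $A\langle C^{\frac{1}{2}}K^{\ast}f,C^{\frac{1}{2}}K^{\ast}f\rangle_{\mathcal{A}}\leq\int_{\Omega}\langle f,F(w)\rangle_{\mathcal{A}}\langle CF(w),f\rangle_{\mathcal{A}}\,d\mu(w)$, so it suffices to dominate $\langle C^{\frac{1}{2}}M^{\ast}f,C^{\frac{1}{2}}M^{\ast}f\rangle_{\mathcal{A}}$ by a constant multiple of $\langle C^{\frac{1}{2}}K^{\ast}f,C^{\frac{1}{2}}K^{\ast}f\rangle_{\mathcal{A}}$.

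First I would invoke Theorem~\ref{3} applied to the operators $M$ and $K$: from $R(M)\subset R(K)$ with $R(K)$ closed one obtains a constant $\lambda\geq 0$ with $MM^{\ast}\leq\lambda^{2}KK^{\ast}$, that is, $\langle MM^{\ast}g,g\rangle_{\mathcal{A}}\leq\lambda^{2}\langle KK^{\ast}g,g\rangle_{\mathcal{A}}$ for every $g\in\mathcal{H}$. Next I would rewrite the two controlled quantities as inner products against $C$: using that $C^{\frac{1}{2}}$ is positive and self-adjoint, one has $\langle C^{\frac{1}{2}}M^{\ast}f,C^{\frac{1}{2}}M^{\ast}f\rangle_{\mathcal{A}}=\langle CM^{\ast}f,M^{\ast}f\rangle_{\mathcal{A}}=\langle MCM^{\ast}f,f\rangle_{\mathcal{A}}$, and likewise for $K$. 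Here the commutation hypotheses are essential: since $C$ commutes with $M^{\ast}$ and $K^{\ast}$, taking adjoints (and using $C=C^{\ast}$) shows $C$ commutes with $M$ and $K$, hence with $MM^{\ast}$ and $KK^{\ast}$, and therefore, via the continuous functional calculus, $C^{\frac{1}{2}}$ also commutes with $MM^{\ast}$ and $KK^{\ast}$. This lets me symmetrize, writing $\langle MCM^{\ast}f,f\rangle_{\mathcal{A}}=\langle CMM^{\ast}f,f\rangle_{\mathcal{A}}=\langle MM^{\ast}C^{\frac{1}{2}}f,C^{\frac{1}{2}}f\rangle_{\mathcal{A}}$, and the analogous identity for $K$.

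The substitution $g=C^{\frac{1}{2}}f$ then turns the operator inequality into precisely what is needed, namely $\langle MM^{\ast}C^{\frac{1}{2}}f,C^{\frac{1}{2}}f\rangle_{\mathcal{A}}\leq\lambda^{2}\langle KK^{\ast}C^{\frac{1}{2}}f,C^{\frac{1}{2}}f\rangle_{\mathcal{A}}$, i.e. $\langle C^{\frac{1}{2}}M^{\ast}f,C^{\frac{1}{2}}M^{\ast}f\rangle_{\mathcal{A}}\leq\lambda^{2}\langle C^{\frac{1}{2}}K^{\ast}f,C^{\frac{1}{2}}K^{\ast}f\rangle_{\mathcal{A}}$. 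Chaining this with the $K$-frame lower bound yields $\tfrac{A}{\lambda^{2}}\langle C^{\frac{1}{2}}M^{\ast}f,C^{\frac{1}{2}}M^{\ast}f\rangle_{\mathcal{A}}\leq\int_{\Omega}\langle f,F(w)\rangle_{\mathcal{A}}\langle CF(w),f\rangle_{\mathcal{A}}\,d\mu(w)$, which together with the unchanged upper bound $B$ exhibits $F$ as a continuous $C$-controlled $M$-frame with bounds $\tfrac{A}{\lambda^{2}}$ and $B$, as required by Definition~\ref{6}.

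The main obstacle is the transfer step: the inequality $MM^{\ast}\leq\lambda^{2}KK^{\ast}$ does not survive left-multiplication by the positive operator $C$ in general, so the real content lies in exploiting the commutation relations to recast both expressions in the symmetric form $\langle MM^{\ast}C^{\frac{1}{2}}f,C^{\frac{1}{2}}f\rangle_{\mathcal{A}}$, where the known inequality applies after the substitution $g=C^{\frac{1}{2}}f$. One minor point to dispose of is the degenerate case $\lambda=0$, forcing $M=0$, in which the $M$-frame lower inequality holds trivially for any positive constant.
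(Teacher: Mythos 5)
Your proposal is correct and follows essentially the same route as the paper: apply Theorem~\ref{3} to $R(M)\subset R(K)$ to get $MM^{\ast}\leq\lambda KK^{\ast}$, test this inequality at $C^{\frac{1}{2}}f$, use the commutation of $C$ with $M^{\ast}$ and $K^{\ast}$ to identify $\langle M^{\ast}C^{\frac{1}{2}}f,M^{\ast}C^{\frac{1}{2}}f\rangle_{\mathcal{A}}$ with $\langle C^{\frac{1}{2}}M^{\ast}f,C^{\frac{1}{2}}M^{\ast}f\rangle_{\mathcal{A}}$, and chain with the $K$-frame lower bound to obtain the bounds $\tfrac{A}{\lambda}$ and $B$. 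You are in fact more careful than the paper on the two points it glosses over, namely that the commutation must be promoted to $C^{\frac{1}{2}}$ via functional calculus and that the case $\lambda=0$ is degenerate.
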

   	
\begin{proof}
	Assume that F be a  continuous C-controlled K-frame for $\mathcal{H}$ with bounds A and B, then, 
	\begin{equation}\label{14}
	A\langle C^{\frac{1}{2}} K^{\ast}f,C^{\frac{1}{2}} K^{\ast}f\rangle_{\mathcal{A}}\leq\int_{\Omega}\langle f, F(w) \rangle_{\mathcal{A}} \langle CF(w), f \rangle_{\mathcal{A}} d\mu(w)\leq  B\langle f,  f \rangle_{\mathcal{A}}, f\in \mathcal{H} .
	\end{equation}
	
	Since $R(M)\subseteq R(K)$, by theorem \ref{3}, there exists some $0\leq \lambda$
	such that $$ M M^{\ast} \leq \lambda K K^{\ast} .$$
	Hence, $$\langle M M^{\ast}C^{\frac{1}{2}}f,C^{\frac{1}{2}}f\rangle_{\mathcal{A}} \leq \lambda \langle K K^{\ast}C^{\frac{1}{2}}f,C^{\frac{1}{2}}f\rangle_{\mathcal{A}} ,$$
	then, $$\frac{A}{\lambda} \langle M M^{\ast}C^{\frac{1}{2}}f,C^{\frac{1}{2}}f\rangle_{\mathcal{A}} \leq  A \langle K K^{\ast}C^{\frac{1}{2}}f,C^{\frac{1}{2}}f\rangle_{\mathcal{A}}. $$
	By \eqref{14}, we have,
    $$\frac{A}{\lambda} \langle  M^{\ast}C^{\frac{1}{2}}f, M^{\ast}C^{\frac{1}{2}}f\rangle_{\mathcal{A}}\leq \int_{\Omega}\langle f, F(w) \rangle_{\mathcal{A}} \langle C F(w), f \rangle_{\mathcal{A}} d\mu(w) \leq B \langle f, f\rangle_{\mathcal{A}}. $$
    Then F is  continuous C-controlled M-frame for $\mathcal{H}$ with bounds $\frac{A}{\lambda}$ and B.
\end{proof}  
    The following results gives the invariance of a continuous C-controlled Bessel mapping by a adjointable operator.\\
\begin{proposition}\label{15}
	Let $T\in End_{\mathcal{A}}^{\ast}(\mathcal{H})$ such that $TC=CT$ and F be a continuous C-controlled Bessel mapping with bound D. Then $TF$ is also a continuous C-controlled Bessel mapping with bound $D\|T^{\ast}\|$.\\
\end{proposition}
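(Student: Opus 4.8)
The plan is to reduce the continuous $C$-controlled Bessel estimate for $TF$ to the one already assumed for $F$, by transferring the operator $T$ onto the argument $f$ using adjointness together with the commutation relation $TC=CT$. First I would check that $TF$ is weakly measurable: for each $f\in\mathcal{H}$ the map $w\mapsto\langle f,TF(w)\rangle_{\mathcal{A}}=\langle T^{\ast}f,F(w)\rangle_{\mathcal{A}}$ is measurable, since $T^{\ast}f$ is a fixed vector and $F$ is weakly measurable by hypothesis.

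The heart of the argument is an algebraic rewriting inside the integral. Since $T$ is adjointable, $\langle f,TF(w)\rangle_{\mathcal{A}}=\langle T^{\ast}f,F(w)\rangle_{\mathcal{A}}$; and using $CT=TC$ followed by adjointness, $\langle CTF(w),f\rangle_{\mathcal{A}}=\langle TCF(w),f\rangle_{\mathcal{A}}=\langle CF(w),T^{\ast}f\rangle_{\mathcal{A}}$. Substituting both factors into the defining integral for $TF$ gives
$$\int_{\Omega}\langle f,TF(w)\rangle_{\mathcal{A}}\langle CTF(w),f\rangle_{\mathcal{A}}\,d\mu(w)=\int_{\Omega}\langle T^{\ast}f,F(w)\rangle_{\mathcal{A}}\langle CF(w),T^{\ast}f\rangle_{\mathcal{A}}\,d\mu(w),$$
which is exactly the $C$-controlled Bessel expression for $F$ evaluated at the vector $T^{\ast}f$.

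Now I would apply the Bessel bound $D$ of $F$ to the right-hand side, obtaining the upper estimate $D\,\langle T^{\ast}f,T^{\ast}f\rangle_{\mathcal{A}}$. To finish, invoke Lemma \ref{2} for the bounded $\mathcal{A}$-linear operator $T^{\ast}$, which supplies a constant (namely $\|T^{\ast}\|^{2}$) with $\langle T^{\ast}f,T^{\ast}f\rangle_{\mathcal{A}}\leq\|T^{\ast}\|^{2}\langle f,f\rangle_{\mathcal{A}}$. Chaining the two inequalities yields
$$\int_{\Omega}\langle f,TF(w)\rangle_{\mathcal{A}}\langle CTF(w),f\rangle_{\mathcal{A}}\,d\mu(w)\leq D\|T^{\ast}\|^{2}\langle f,f\rangle_{\mathcal{A}},$$
so $TF$ is a continuous $C$-controlled Bessel mapping with the stated bound.

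The only place any hypothesis beyond adjointness of $T$ is used is the single commutation step $CTF(w)=TCF(w)$; this is precisely what keeps the controlling operator $C$ in place while $T$ is moved across as $T^{\ast}$ acting on $f$, so that the integrand coincides with $F$'s Bessel integrand. I expect this to be the one point requiring care, since without $TC=CT$ the cross factor $\langle CTF(w),f\rangle_{\mathcal{A}}$ would not reassemble into the assumed expression for $F$. I would also remark that the constant produced by Lemma \ref{2} is $\|T^{\ast}\|^{2}$, so the natural Bessel bound emerging from this computation is $D\|T^{\ast}\|^{2}$.
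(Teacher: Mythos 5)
Your proof is correct and follows essentially the same route as the paper's: rewrite the integrand via adjointness and the commutation $TC=CT$ so that it becomes the $C$-controlled Bessel expression for $F$ at the vector $T^{\ast}f$, then apply the bound $D$ and the estimate $\langle T^{\ast}f,T^{\ast}f\rangle_{\mathcal{A}}\leq\|T^{\ast}\|^{2}\langle f,f\rangle_{\mathcal{A}}$. Your closing remark is also right: the computation (the paper's included) yields the Bessel bound $D\|T^{\ast}\|^{2}$, so the exponent in the proposition's stated bound $D\|T^{\ast}\|$ is a typo.
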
	
\begin{proof}
	Assume that F is a continuous C-controlled Bessel mapping with bound D.
	Hence we have, $$\int_{\Omega}\langle f, F(w) \rangle_{\mathcal{A}} \langle C F(w), f \rangle_{\mathcal{A}} d\mu(w)\leq D \langle f,f\rangle_{\mathcal{A}},  f\in \mathcal{H}.$$
    We have,
    \begin{align*}
    \int_{\Omega}\langle f, TF(w) \rangle_{\mathcal{A}} \langle C TF(w), f \rangle_{\mathcal{A}} d\mu(w)&=\int_{\Omega}\langle T^{\ast}f, F(w) \rangle_{\mathcal{A}} \langle T C F(w), f \rangle_{\mathcal{A}} d\mu(w)\\
    &=\int_{\Omega}\langle T^{\ast} f, F(w) \rangle_{\mathcal{A}} \langle C F(w), T^{\ast}f \rangle_{\mathcal{A}} d\mu(w)\\ &\leq  D \langle T^{\ast}f,T^{\ast}f\rangle_{\mathcal{A}}\\ &\leq D\|T^{\ast}\|^2 \langle f,f\rangle_{\mathcal{A}}.
    \end{align*}
    	
    The result holds.
    
\end{proof}	
    Now, we study the invariance of a continuous C-controlled K-frame mapping by adjointable operator.\\
\begin{theorem}\label{16}
	Let $ C \in GL^{+}(\mathcal{H})$, and  F be a  continuous C-controlled K-frame for $\mathcal{H}$ with bounds A and B. If  $ T\in End_{\mathcal{A}}^{\ast}(\mathcal{H})$ with closed range such that   $R(K^{\ast}T^{\ast})$ is closed and C, K, T commute with each other. Then $TF$ is a continuous C-controlled K-frame for $R(T)$.
\end{theorem}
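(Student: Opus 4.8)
The plan is to verify Definition \ref{6} for the mapping $w\mapsto TF(w)$, with $f$ now ranging over the closed submodule $R(T)$ (which is orthogonally complemented, since $T$ is adjointable with closed range). First I would rewrite the controlled frame quantity of $TF$ in terms of that of $F$. Exactly as in the computation of Proposition \ref{15}, adjointability together with $CT=TC$ gives $\langle f,TF(w)\rangle_{\mathcal{A}}=\langle T^{\ast}f,F(w)\rangle_{\mathcal{A}}$ and $\langle C\,TF(w),f\rangle_{\mathcal{A}}=\langle CF(w),T^{\ast}f\rangle_{\mathcal{A}}$, so that
\begin{equation*}
\int_{\Omega}\langle f,TF(w)\rangle_{\mathcal{A}}\langle C\,TF(w),f\rangle_{\mathcal{A}}\,d\mu(w)=\int_{\Omega}\langle T^{\ast}f,F(w)\rangle_{\mathcal{A}}\langle CF(w),T^{\ast}f\rangle_{\mathcal{A}}\,d\mu(w).
\end{equation*}
Applying the defining inequalities \eqref{7} of the $C$-controlled $K$-frame $F$ to the vector $T^{\ast}f$ then sandwiches this integral between $A\langle C^{1/2}K^{\ast}T^{\ast}f,C^{1/2}K^{\ast}T^{\ast}f\rangle_{\mathcal{A}}$ and $B\langle T^{\ast}f,T^{\ast}f\rangle_{\mathcal{A}}$.

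The upper (Bessel) estimate is then immediate: since $\langle T^{\ast}f,T^{\ast}f\rangle_{\mathcal{A}}\leq\|T^{\ast}\|^{2}\langle f,f\rangle_{\mathcal{A}}$ by Lemma \ref{2}, the integral is bounded above by $B\|T\|^{2}\langle f,f\rangle_{\mathcal{A}}$, giving upper bound $B\|T\|^{2}$. For the lower estimate I would exploit the commutativity hypotheses: from $CK=KC$, $CT=TC$, $KT=TK$, after passing to adjoints and to the positive square root $C^{1/2}$ (which commutes with everything that commutes with $C$), one finds that $C^{1/2}$, $K^{\ast}$, $T^{\ast}$ commute pairwise. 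Hence $C^{1/2}K^{\ast}T^{\ast}f=T^{\ast}\bigl(C^{1/2}K^{\ast}f\bigr)$, and the lower estimate reads $A\,\langle T^{\ast}(C^{1/2}K^{\ast}f),T^{\ast}(C^{1/2}K^{\ast}f)\rangle_{\mathcal{A}}\leq\int_{\Omega}\langle f,TF(w)\rangle_{\mathcal{A}}\langle C\,TF(w),f\rangle_{\mathcal{A}}\,d\mu(w)$.

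It remains to recover $\langle C^{1/2}K^{\ast}f,C^{1/2}K^{\ast}f\rangle_{\mathcal{A}}$ from $\langle C^{1/2}K^{\ast}T^{\ast}f,C^{1/2}K^{\ast}T^{\ast}f\rangle_{\mathcal{A}}$, and this is the heart of the matter. The idea is that on $R(T)$ the operator $T^{\ast}$ is bounded below: viewing $T$ as a surjection $\mathcal{H}\to R(T)$, Lemma \ref{1} yields $m'>0$ with $\langle T^{\ast}x,T^{\ast}x\rangle_{\mathcal{A}}\geq m'\langle x,x\rangle_{\mathcal{A}}$ for $x\in R(T)$. Feeding this together with the closedness of $R(K^{\ast}T^{\ast})$ into Theorem \ref{3} produces a constant $\lambda\geq 0$ with $\|C^{1/2}K^{\ast}f\|^{2}\leq\lambda\,\|C^{1/2}K^{\ast}T^{\ast}f\|^{2}$ for $f\in R(T)$; consequently $\tfrac{A}{\lambda}\langle C^{1/2}K^{\ast}f,C^{1/2}K^{\ast}f\rangle_{\mathcal{A}}$ bounds the integral from below. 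Combined with the Bessel estimate, Theorem \ref{555} then shows $TF$ is a continuous $C$-controlled $K$-frame for $R(T)$ with bounds $A/\lambda$ and $B\|T\|^{2}$.

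The main obstacle is precisely this last comparison. The inequality $\|C^{1/2}K^{\ast}f\|\leq\sqrt{\lambda}\,\|C^{1/2}K^{\ast}T^{\ast}f\|$ cannot hold for all $f\in\mathcal{H}$ (on $\mathcal{H}$ it would force $R(K)\subseteq R(TK)$, which fails whenever $T$ is, say, a proper projection), so it is essential that $f$ be restricted to $R(T)$ and that the boundedness below of $T^{\ast}$ there be used. I would therefore run the Douglas-type argument of Theorem \ref{3} on the submodule $R(T)$, i.e. with the operators precomposed with the orthogonal projection onto $R(T)$, checking that the hypothesis ``$R(K^{\ast}T^{\ast})$ closed'' is exactly what keeps the relevant target range closed so that Theorem \ref{3} applies and $\lambda$ genuinely exists.
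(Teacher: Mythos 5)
Your proposal follows essentially the same route as the paper: rewrite the controlled frame quantity of $TF$ as that of $F$ evaluated at $T^{\ast}f$, obtain the upper bound $B\|T^{\ast}\|^{2}$ exactly as in Proposition \ref{15}, and get the lower bound by exploiting that $T^{\ast}$ is bounded below on the closed submodule $R(T)$, so that $\|C^{1/2}K^{\ast}f\|$ is controlled by $\|C^{1/2}K^{\ast}T^{\ast}f\|$ after commuting the operators. The only cosmetic difference is the device used to express this boundedness below: the paper writes $K^{\ast}C^{1/2}f=(TT^{\dagger})^{\ast}K^{\ast}C^{1/2}f$ via the Moore--Penrose inverse and then invokes Theorem \ref{3} to pass from $T^{\ast}K^{\ast}$ to $K^{\ast}T^{\ast}$, whereas you invoke Lemma \ref{1} for the surjection $T\colon\mathcal{H}\to R(T)$; both encode the same fact and lead to the same bounds.
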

\begin{proof}
	Assume that F is a continuous C-controlled K-frame with bounds A and B. Then, 
	$$A\langle C^{\frac{1}{2}}K^{\ast}f,C^{\frac{1}{2}}K^{\ast}f\rangle_{\mathcal{A}}\leq \int_{\Omega}\langle f, F(w) \rangle_{\mathcal{A}} \langle C F(w), f \rangle_{\mathcal{A}} \leq B \langle f,f\rangle_{\mathcal{A}}, f\in \mathcal{H}. $$
	Since T has a closed range, then T has Moore-Penrose inverse $T^{\dagger}$ such that 
	$TT^{\dagger}T=T$ and $T^{\dagger}TT^{\dagger}=T^{\dagger}$, so $TT^{\dagger}_{/{ R(T)}}=I_{R(T)}$
	and 
	$(TT^{\dagger})^{\ast}=I^{\ast}=I=TT^{\dagger}$.\\
	We have,
	\begin{align*}
	\langle K^{\ast}C^{\frac{1}{2}}f,K^{\ast}C^{\frac{1}{2}}f\rangle_{\mathcal{A}}&=\langle (TT^{\dagger})^{\ast} K^{\ast}C^{\frac{1}{2}}f,(TT^{\dagger})^{\ast}K^{\ast}C^{\frac{1}{2}}f\rangle_{\mathcal{A}}\\
	&= \langle (T^{\dagger})^{\ast}T^{\ast} K^{\ast}C^{\frac{1}{2}}f,(T^{\dagger})^{\ast}T^{\ast}K^{\ast}C^{\frac{1}{2}}f\rangle_{\mathcal{A}}.
	\end{align*}
	So,
	\begin{equation}\label{301}
	\langle K^{\ast}C^{\frac{1}{2}}f,K^{\ast}C^{\frac{1}{2}}f\rangle_{\mathcal{A}}\leq \|(T^{\dagger})^{\ast}\|^2\langle T^{\ast} K^{\ast}C^{\frac{1}{2}}f,T^{\ast}K^{\ast}C^{\frac{1}{2}}f\rangle_{\mathcal{A}}.	
	\end{equation}
	Therfore,
	\begin{equation}\label{55}
	 \|(T^{\dagger})^{\ast}\|^{-2} \langle  K^{\ast}C^{\frac{1}{2}}f,K^{\ast}C^{\frac{1}{2}}f\rangle_{\mathcal{A}} \leq \langle T^{\ast} K^{\ast}C^{\frac{1}{2}}f,T^{\ast}K^{\ast}C^{\frac{1}{2}}f\rangle_{\mathcal{A}}.
	\end{equation}
	
	Consequently, from theorem \ref{3}, and $R(T^{\ast}K^{\ast}) \subset R(K^{\ast}T^{\ast})$, there exists some $\lambda \geq 0$ such that,
	
	\begin{equation}\label{18}
	\langle T^{\ast}K^{\ast}C^{\frac{1}{2}}f, T^{\ast}K^{\ast}C^{\frac{1}{2}}f\rangle_{\mathcal{A}} \leq \lambda \langle K^{\ast}T^{\ast}C^{\frac{1}{2}}f, K^{\ast}T^{\ast}C^{\frac{1}{2}}f\rangle_{\mathcal{A}}. 
	\end{equation}
	
	Hence, using \eqref{55} and \eqref{18} we have,
	\begin{align*}
	\int_{\Omega}\langle f, TF(w) \rangle_{\mathcal{A}} \langle C TF(w), f \rangle_{\mathcal{A}} d\mu(w)
	&=\int_{\Omega}\langle T^{\ast}f, F(w) \rangle_{\mathcal{A}} \langle T C F(w), f \rangle_{\mathcal{A}} d\mu(w)\\
	&=\int_{\Omega}\langle T^{\ast}f, F(w) \rangle_{\mathcal{A}} \langle  C F(w),T^{\ast} f \rangle_{\mathcal{A}} d\mu(w)\\
	&\geq A \langle C^{\frac{1}{2}} K^{\ast} T^{\ast} f,C^{\frac{1}{2}} K^{\ast} T^{\ast}f\rangle_{\mathcal{A}}\\ &\geq\frac{A}{\lambda} \langle T^{\ast} C^{\frac{1}{2}} K^{\ast}  f,T^{\ast}C^{\frac{1}{2}} K^{\ast} f\rangle_{\mathcal{A}},   
	\end{align*}
	then,
	\begin{equation}\label{66}
	\int_{\Omega}\langle f, TF(w) \rangle_{\mathcal{A}} \langle C TF(w), f \rangle_{\mathcal{A}} d\mu(w)\geq \frac{A}{\lambda} \|(T^{\dagger})^{\ast}\|^{-2} \langle  C^{\frac{1}{2}} K^{\ast}  f,C^{\frac{1}{2}} K^{\ast} f\rangle_{\mathcal{A}}	
	\end{equation}

	Using \eqref{66} and proposition\ref{15}, the result holds.\\

\end{proof}
\begin{theorem}\label{19}
	Let $ C \in GL^{\dagger}(\mathcal{H})$ and F be a  continuous C-controlled K-frame for $\mathcal{H}$ with bounds A and B. \\If $ T\in End_{\mathcal{A}}^{\ast}(\mathcal{H})$ is a isometry such that $R(T^{\ast}K^{\ast}) \subset R(K^{\ast}T^{\ast})$ with    $R(K^{\ast}T^{\ast})$ is closed and C, K, T commute with each other, then $TF$ 
	is a continuous C-controlled K-frame for $\mathcal{H}$.
	
\end{theorem}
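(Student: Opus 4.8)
The plan is to verify the two continuous $C$-controlled $K$-frame inequalities for $TF$ directly, by transporting the inequalities already known for $F$ to the vector $T^{\ast}f$. First I would rewrite the controlled integral attached to $TF$ using that $T$ commutes with $C$: for every $f\in\mathcal{H}$ one has $\langle f,TF(w)\rangle_{\mathcal{A}}=\langle T^{\ast}f,F(w)\rangle_{\mathcal{A}}$ and, since $CT=TC$, $\langle CTF(w),f\rangle_{\mathcal{A}}=\langle TCF(w),f\rangle_{\mathcal{A}}=\langle CF(w),T^{\ast}f\rangle_{\mathcal{A}}$. Hence
\begin{equation*}
\int_{\Omega}\langle f,TF(w)\rangle_{\mathcal{A}}\langle CTF(w),f\rangle_{\mathcal{A}}\,d\mu(w)=\int_{\Omega}\langle T^{\ast}f,F(w)\rangle_{\mathcal{A}}\langle CF(w),T^{\ast}f\rangle_{\mathcal{A}}\,d\mu(w),
\end{equation*}
and I would apply the frame bounds of $F$ at the point $T^{\ast}f$.

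For the upper bound this already suffices: since $T$ is an isometry, $\|T^{\ast}\|=\|T\|=1$, so $\langle T^{\ast}f,T^{\ast}f\rangle_{\mathcal{A}}\leq\langle f,f\rangle_{\mathcal{A}}$ and the right-hand inequality for $F$ carries over to $TF$ with the same bound $B$.

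The lower bound is the crux. The frame inequality for $F$ at $T^{\ast}f$ produces the lower term $A\langle C^{\frac{1}{2}}K^{\ast}T^{\ast}f,C^{\frac{1}{2}}K^{\ast}T^{\ast}f\rangle_{\mathcal{A}}$. Because $C$ is positive and self-adjoint and commutes with $T$, so does $C^{\frac{1}{2}}$, whence $C^{\frac{1}{2}}T^{\ast}=T^{\ast}C^{\frac{1}{2}}$; and taking adjoints in $KT=TK$ gives $K^{\ast}T^{\ast}=T^{\ast}K^{\ast}$. Therefore $C^{\frac{1}{2}}K^{\ast}T^{\ast}f=T^{\ast}\bigl(C^{\frac{1}{2}}K^{\ast}f\bigr)$, and what remains is to dominate $\langle C^{\frac{1}{2}}K^{\ast}f,C^{\frac{1}{2}}K^{\ast}f\rangle_{\mathcal{A}}$ by $\langle T^{\ast}C^{\frac{1}{2}}K^{\ast}f,T^{\ast}C^{\frac{1}{2}}K^{\ast}f\rangle_{\mathcal{A}}$ up to a constant. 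Following the template of Theorem \ref{16}, I would invoke $T^{\ast}T=I$ together with the hypothesis $R(T^{\ast}K^{\ast})\subset R(K^{\ast}T^{\ast})$ and Theorem \ref{3} to extract some $\lambda\geq0$ realizing the required comparison, and then combine with Theorem \ref{555} to read off the two bounds of $TF$.

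The step I expect to be the main obstacle is precisely this inversion of $T^{\ast}$ in the lower estimate. For a non-surjective isometry $T^{\ast}$ is only a coisometry and is not bounded below: it annihilates $R(T)^{\perp}$, so $\langle T^{\ast}g,T^{\ast}g\rangle_{\mathcal{A}}$ can be far smaller than $\langle g,g\rangle_{\mathcal{A}}$ for $g=C^{\frac{1}{2}}K^{\ast}f$. Upgrading the lower bound from $R(T)$ (as in Theorem \ref{16}) to all of $\mathcal{H}$ therefore seems to force $C^{\frac{1}{2}}K^{\ast}f$ to lie in $R(T)$, and I would scrutinize carefully whether the stated range and commutation hypotheses genuinely deliver this; if they do not, an additional assumption such as $R(K^{\ast})\subseteq R(T)$ (or surjectivity of $T$) would be needed to close the argument.
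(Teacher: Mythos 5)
Your proposal follows the same route as the paper's own proof: rewrite the controlled integral for $TF$ at the point $T^{\ast}f$, use the commutation relations to identify $C^{\frac{1}{2}}K^{\ast}T^{\ast}f$ with $T^{\ast}C^{\frac{1}{2}}K^{\ast}f$, invoke Theorem \ref{3} on the inclusion $R(T^{\ast}K^{\ast})\subset R(K^{\ast}T^{\ast})$ to produce a constant $\lambda$, and settle the upper bound by the Bessel estimate (the paper cites Proposition \ref{15}; your observation that $\|T^{\ast}\|=\|T\|=1$ gives the same bound $B$). The obstacle you single out is precisely where the paper's argument pivots, and your skepticism is justified: the paper's chain of inequalities opens with
\begin{equation*}
\frac{A}{\lambda}\,\|C^{\frac{1}{2}}K^{\ast}f\|^{2}=\frac{A}{\lambda}\,\|T^{\ast}C^{\frac{1}{2}}K^{\ast}f\|^{2},
\end{equation*}
justified only by ``$T$ is an isometry.'' An isometry satisfies $\|Tg\|=\|g\|$, not $\|T^{\ast}g\|=\|g\|$: from $T^{\ast}T=I$ one gets $\|T^{\ast}g\|^{2}=\|\langle TT^{\ast}g,g\rangle_{\mathcal{A}}\|$ with $TT^{\ast}$ an idempotent onto $R(T)$, so for a non-surjective isometry this quantity can be strictly smaller than $\|g\|^{2}$ (even zero) whenever $g=C^{\frac{1}{2}}K^{\ast}f$ does not lie in $R(T)$. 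The equality the paper asserts, and the domination you correctly decline to assert, holds only under an additional hypothesis such as $T$ being a co-isometry (e.g.\ unitary) or a range condition forcing $C^{\frac{1}{2}}K^{\ast}f\in R(T)$ for every $f$.

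In short, your outline reproduces the paper's strategy, and the step you flag as the main obstacle is a genuine gap --- but it is a gap in the paper's proof rather than an omission of yours. With one of the supplementary assumptions you propose (surjectivity of $T$, or $R(C^{\frac{1}{2}}K^{\ast})\subseteq R(T)$, equivalently $R(K^{\ast})\subseteq R(T)$ when $C^{\frac{1}{2}}$ commutes with everything in sight and is invertible), your argument closes via Theorem \ref{555} into a proof identical in substance to the paper's, with bounds $\frac{A}{\lambda}$ and $B$. Without such an assumption, only the lower bound on $R(T)$ is obtained, i.e.\ the conclusion of Theorem \ref{16}, not the stated conclusion on all of $\mathcal{H}$.
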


\begin{proof}
	Using theorem \ref{3}, there exists some $\lambda\geq 0$ such that,
    $$\|T^{\ast}K^{\ast}C^{\frac{1}{2}}f\|^2\leq \lambda \|K^{\ast}T^{\ast}C^{\frac{1}{2}}f\|^2.$$
    Assume A the lower bound for the  continuous C-controlled K-frame F and T is an isometry then,
    \begin{align*}
     {\frac{A}{\lambda}}\|C^{\frac{1}{2}}K^{\ast}f\|^2&={\frac{A}{\lambda}}\|T^{\ast}C^{\frac{1}{2}}K^{\ast}f\|^2\\
    &\leq A \|K^{\ast}T^{\ast}C^{\frac{1}{2}}f\|^2 \\& = A \|C^{\frac{1}{2}}K^{\ast}T^{\ast}f \|^2\\ &\leq \int_{\Omega}\langle T^{\ast}f, F(w) \rangle_{\mathcal{A}} \langle C F(w), T^{\ast} f \rangle_{\mathcal{A}} d \mu(w)\\
    &=\int_{\Omega}\langle f, TF(w) \rangle_{\mathcal{A}} \langle TC F(w),  f \rangle_{\mathcal{A}} d \mu(w),
    \end{align*}
    then, 
    \begin{equation}\label{77}
    {\frac{A}{\lambda}}\|C^{\frac{1}{2}}K^{\ast}f\|^2\leq\int_{\Omega}\langle f, TF(w) \rangle_{\mathcal{A}} \langle CT F(w),  f \rangle_{\mathcal{A}} d \mu(w).  	
    \end{equation}
    
    Hence, from proposition \ref{15} and inequality \eqref{77}, we conclude that $TF$ is a continuous C-controlled K-frame for $\mathcal{H}$ with bounds ${\frac{A}{\lambda}}$ and $B \|T^{\ast}\|^2$.\\
    
\end{proof}

\bibliographystyle{amsplain}

\vspace{0.1in}

\end{document}